\documentclass[final]{siamart1116}


\newcommand{\mx}{\mathbf{x}}
\newcommand{\me}{\mathbf{e}}
\newcommand{\ox}{\overline{x}}
\newcommand{\oV}{\overline{V}}

\newcommand{\tg}{\gamma}

\usepackage{dsfont}
\usepackage{bbm}

\newsiamremark{remark}{Remark}
\newsiamremark{assumption}{Assumption}


\usepackage{lipsum}
\usepackage{amsfonts}
\usepackage{graphicx}
\usepackage{epstopdf}
\usepackage{algorithmic}
\ifpdf
\DeclareGraphicsExtensions{.eps,.pdf,.png,.jpg}
\else
\DeclareGraphicsExtensions{.eps}
\fi

\usepackage{multirow}
\usepackage{booktabs}
\usepackage{verbatim}

\usepackage{subfigure}

\numberwithin{theorem}{section}

\newcommand{\TheTitle}{Swarming for Faster Convergence in Stochastic Optimization} 
\newcommand{\TheAuthors}{S. Pu and A. Garcia}

\headers{Swarming for Faster Convergence in Stochastic Optimization}{\TheAuthors}

\title{{\TheTitle}\thanks{Accepted in \emph{SIAM Journal on Control and Optimization}. The authors gratefully acknowledge partial support from AFOSR [FA9550-15-1-0504] and NSF [CMMI-1829552].
}}

\author{
	Shi Pu\thanks{School of Electrical, Computer and Energy Engineering, Arizona State University, Tempe, Arizona 85287-5706 USA
		(\email{sp3dw@virginia.edu}).}
	\and
	Alfredo Garcia\thanks{Department of Industrial and Systems Engineering, Texas A\&M University, College Station, TX 77843-3131 USA (\email{alfredo.garcia@tamu.edu})}
}

\ifpdf
\hypersetup{
  pdftitle={\TheTitle},
  pdfauthor={\TheAuthors}
}
\fi




\allowdisplaybreaks
\begin{document}

\maketitle

\begin{abstract}
  We study a distributed framework for stochastic optimization which is inspired by models of collective motion found in nature (e.g., swarming) with mild communication requirements.  Specifically, we analyze a scheme in which each one of $N > 1$ independent threads, implements in a distributed and unsynchronized fashion, a stochastic gradient-descent algorithm which is perturbed by a swarming potential. Assuming the overhead caused by synchronization is not negligible, we show the swarming-based approach exhibits better performance than a centralized algorithm (based upon the average of $N$ observations) in terms of (real-time) convergence speed.
  We also derive an error bound that is monotone decreasing in network size and connectivity.
  We characterize the scheme's finite-time performances for both convex and nonconvex objective functions.
\end{abstract}

\begin{keywords}
	stochastic optimization, swarming-based framework, distributed optimization
\end{keywords}

\begin{AMS}
 90C15, 90C25, 68Q25
\end{AMS}

\section{Introduction}

Consider the optimization problem%
\begin{equation}
\min_{x\in \mathbb{R}^{m}}f(x)  \label{opt Problem_def}
\end{equation}%
where $f:\mathbb{R}^{m}\rightarrow \mathbb{R}$ is a differentiable function.
In many applications, the gradient information $\nabla f(x)$ is not available in closed form and only noisy estimates can be computed by time-consuming simulations (see \cite{fu2015stochastic,spall2005introduction} for a survey of gradient
estimation techniques).
Noise can come from many sources such as incomplete convergence, modeling and discretization
error, and finite sample size for Monte-Carlo methods (see for instance \cite{kleijnen2008design}).

In this paper we consider the case in which $N>1$ independent and symmetric \emph{stochastic oracles} ($\mathcal{SO}$s) can be queried in order to obtain noisy gradient samples of the form $g(x,\xi)$ that satisfies the following condition:
\begin{assumption}
	\label{asp: gradient samples}
	Each random vector $\xi\in\mathbb{R}^p$ is independent. $\mathbb{E}[g(x,\xi)]=\nabla f(x)$, $\mathbb{E}[\|g(x,\xi)-\nabla f(x)\|^2]\le \sigma^2$ for some $\sigma>0$ for all $x\in\mathbb{R}^m$.
\end{assumption}
We assume the time needed to generate a gradient sample is random and not negligible (e.g., samples are obtained by executing time-consuming simulations).
In this setting, a centralized implementation of the stochastic gradient-descent algorithm in which a batch of $N$ samples is gathered at every iteration, incurs an overhead that is increasing in $N$. If sampling is undertaken in parallel by simultaneously querying $N$ $\mathcal{SO}$s, the overhead burden is reduced but is still non-negligible due to the need for synchronization. In this paper we study a distributed optimization scheme that does not require synchronization. In the scheme, $N>1$ independent computing threads implement each a stochastic gradient-descent algorithm in which a noisy gradient sample is perturbed by an attractive term, a function of the relative distance between solutions identified by neighboring threads (where the notion of neighborhood is related to a given network topology). This coupling is similar to those found in mathematical models of swarming (see \cite{gazi2011swarm}). Complex forms of collective motion such as swarms can be found in nature in many
organisms ranging from simple bacteria to mammals (see \cite{parrish2002self,okubo1986dynamical,reynolds1987flocks} for references). Such forms of collective behavior rely on limited communication amongst individuals and are believed to be effective for avoiding predators and/or for
increasing the chances of finding food (foraging) (see \cite{grunbaum1998schooling,punoise}). 

We show the proposed scheme has an important noise reduction property as noise realizations that induce individual trajectories differing too much from the group average are likely to be discarded because of the attractive term which aims to maintain cohesion.
In contrast to the centralized sample average approach, the noise reduction obtained in a swarming-based framework with $N>1$ threads does not require synchronization since each thread only needs the information on the current solutions identified by neighboring threads.
When sampling times are not negligible and exhibit large variation, synchronization may result in significant overhead so that the {\em real-time} performance of stochastic gradient-descent algorithms based upon the average of $N$ samples obtained in parallel is highly affected by large sampling time variability. In  contrast, the {\em real-time} performance of a swarming-based implementation with $N>1$ threads exhibits better performance as each thread can asynchronously update its solution based upon a  {\em small} sample size and {\em still} reap the benefits of noise reduction stemming from the swarming discipline.

The main contribution of this paper is the formalization of the benefits of the swarming-based approach for stochastic optimization. Specifically, we show the approach exhibits better performance than a centralized algorithm (based upon the average of $N$ observations) in terms of (real time) convergence speed.
We derive an error bound that is monotone decreasing in network size and connectivity. Finally, we characterize the scheme's finite-time performances for both convex and non-convex objective functions.

The structure of this paper is as follows. We introduce two candidate algorithms for solving problem (\ref{opt Problem_def}) in Section \ref{sec: pre} along with the related literature. In Section \ref{sec: pre anal} we perform preliminary analysis on the swarming-based approach. In Section \ref{sec: main} we present our main results. A numerical example is provided in Section \ref{sec: numeric}. We conclude the paper in Section \ref{sec: con}.

\section{Preliminaries}
\label{sec: pre}
We now present two algorithms for solving problem (\ref{opt Problem_def}). First, we introduce a centralized stochastic gradient-descent algorithm. Then we propose the corresponding swarming-based approach. 

\subsection*{A Centralized Algorithm}
A {\em centralized} gradient-descent algorithm is of the form (see for instance \cite{kushner2003stochastic}):%
\begin{equation}
x_{k+1}=x_{k}+\gamma_k u_k, \ \ k\in \mathbb{N} \label{eq:x_k centralized pre}
\end{equation}%
Suppose in a single step, $N>1$ $\mathcal{SO}$s each generate a noisy gradient sample $g(x_k,\xi_{i,k})$ in parallel, and $u_k=-(1/N)\sum_{i=1}^N g(x_k,\xi_{i,k})$. 
	Then (\ref{eq:x_k centralized pre}) can be rewritten as  
	\begin{equation}
	x_{k+1}=x_{k}+\gamma_k (-\nabla f(x_k)+\epsilon_k), \ \ k\in \mathbb{N} \label{eq:x_k centralized}
	\end{equation}
	where $\epsilon_k=\nabla f(x_k)-(1/N)\sum_{i=1}^{N}g(x_k,\xi_{i,k})$.
	$\gamma_k>0$ is the step size. In this paper we consider constant step size policies, i.e., $\gamma_k:=\gamma$ for some $\gamma>0$.

\subsection*{A Swarming-Based Approach}
A swarming-based asynchronous implementation has $N$ computing threads. In contrast to the centralized approach, each thread $i$ queries one $\mathcal{SO}$ and independently implements a gradient-descent algorithm based on only {\em one} sample per step:
\begin{equation}
x_{i,k+1}=x_{i,k}+\tg \left[-\nabla f(x_{i,k})+\varepsilon_{i,k}-a\sum_{j=1,j\neq
	i}^{N}\alpha_{ij}(x_{i,k}-x_{ij,k})\right], \ \ k\in \mathbb{N}
\label{opt eq: x_i,k_basic}
\end{equation}%
where $\varepsilon_{i,k}=\nabla f(x_{i,k})-g(x_{i,k},\xi_{i,k})$,
and $x_{ij,k}$ denotes the solution of thread $j$ at the time of thread $i$'s $k$-th update.

The last term on the right hand side of equation (\ref{opt eq: x_i,k_basic}) represents the function of mutual attraction between individual threads, in which $a>0$ measures the degree of attraction (see \cite{gazi2004class} for a reference). Let $\mathcal{G}=(\mathcal{V},\mathcal{E})$ denote the graph of all threads, where $\mathcal{V}$ stands for the set of vertices (threads), and $\mathcal{E}\subseteq \mathcal{V}\times\mathcal{V}$ is the set of edges connecting vertices.
Let $A=[\alpha_{ij}]\in \mathbb{R}^{N\times N}$ be the adjacency matrix of $\mathcal{G}$ where $\alpha_{ij}\in\{0,1\}$. $\alpha_{ij}=1$ indicates that thread $i$ is informed of the solution identified by threads $j$, or $(i,j)\in\mathcal{E}$. 
In this paper we assume the following condition regarding the network structure amongst threads:
\begin{assumption}
\label{asp: network}
The graph $\mathcal{G}$ corresponding to the network of threads is undirected ($A=A^T$) and connected, i.e., there is a path between every pair of vertices.
\end{assumption}

\subsection{Sampling Times and Time-Scales}
\label{subsec: sampling time}
In our comparison of convergence speeds in real-time we will need to describe the time-scales in which the algorithms described above operate. These time scales depend upon the following standing assumption regarding sampling times:
\begin{assumption}
	\label{asp: exponential}
	The times $\tilde{\Delta}t_i$ needed for generating gradient samples by oracles $\mathcal{SO}_i,i=1,\ldots,N$ are independent and exponentially distributed with mean $\Delta t>0$. 
\end{assumption}
It follows that for the centralized implementation, the time in between updates is $\tilde{\Delta} t_c=\max_{i \in \{1,\ldots,N\}}\{\tilde{\Delta} t_i\}$. 
To describe the time-scale for the swarming-based scheme, let $\tilde{\Delta} t_s=\min_{i \in \{1,\ldots,N\}}\{\tilde{\Delta} t_i\}$. In this time-scale, the time in between any two updates (by possibly different threads) is exponentially distributed with mean $\Delta t/N$.  Suppose there is a (virtual) global clock that ticks whenever a thread updates its solution,
and let $\mathds{1}_{i,k}$ be the indicator random variable for the event that thread $i$ provides the ($k+1$)-th update. 
We rewrite the asynchronous algorithm as follows:
	\begin{equation}
		x_{i,k+1}=x_{i,k}+\tg u_{i,k}\mathds{1}_{i,k}, \ \ k\in \mathbb{N}
			\label{opt eq: x_i,k}
	\end{equation}%
	where 
	\begin{equation*}
		u_{i,k}=-\nabla f(x_{i,k})+\varepsilon _{i,k}-a\sum_{j=1,j\neq
			i}^{N}\alpha_{ij}(x_{i,k}-x_{j,k}).
	\end{equation*}%
%

\subsection{Swarming for Faster Convergence: A Preview of the Main Results}
To illustrate the benefits of the swarming-based approach, we will compare the performances of the centralized approach (\ref{opt eq: x_i,k_basic}) and the swarming-based approach
(\ref{eq:x_k centralized}) in Section \ref{sec: main}. In what follows we provide a brief introduction to the main results.
	
	Denote $\ox_k:=(1/N)\sum_{i=1}^Nx_{i,k}$. Let $G_k:=\mathbb{E}[\|x_k-x^*\|^2]$ (respectively, $U_k:=\mathbb{E}[\|\ox_k-x^*\|^2]$) measure the quality of solutions under the centralized scheme (respectively, swarming-based approach). 
	
	Assuming $f(\cdot)$ is $\kappa$-strongly convex with Lipschitz continuous gradients, for sufficiently small $\gamma>0$,  we will show that $\limsup_k\mathbb{E}[\|x_k-x^*\|^2]$ and $\limsup_k\mathbb{E}[\|\ox_k-x^*\|^2]$ both have an upper bound in the order of $\mathcal{O}(\sigma^2/{(\kappa N)})$.
	
Regarding real-time performance, we will show that $\sup_k\mathbb{E}[U_{Nk}]$ and $\sup_k\mathbb{E}[G_k]$ converge at a similar rate. However, the time needed to complete $Nk$ iterations in the swarming-based method is approximately 
\begin{equation*}
Nk\cdot\Delta t/N=k\Delta t,
\end{equation*}
while the time needed for the centralized algorithm to complete $k$ steps is approximately $k\mathbb{E}[\tilde{\Delta} t_c]$.
The speed-up achieved through the swarming-based framework is due to the fact that $\Delta t<\mathbb{E}[\tilde{\Delta} t_c]=(1+1/2+\cdots+1/N)\Delta t$.
Since the relation holds true in general, this property is likely to be preserved under other distributions of sampling times as well.
As we shall see in Section \ref{sec: main}, the ratio of convergence speeds between the swarming-based approach and the centralized algorithm is approximately $\mathbb{E}[\tilde{\Delta} t_c]:\Delta t$. In other word, the convergence speed is inversely proportional to the rate that gradient samples are generated.

	\subsection{Literature Review}
	Our work is linked with the extensive literature in stochastic approximation (SA) methods dating back to \cite{robbins1951stochastic} and \cite{kiefer1952stochastic}. These works include the analysis of convergence (conditions and rates for convergence, proper step size choices) in the context of diverse noise models (see \cite{kushner2003stochastic}). 
	Recently there has been considerable interest in parallel or distributed implementation of stochastic gradient-descent methods (see \cite{cavalcante2013distributed,towfic2014adaptive,lobel2011distributed,srivastava2011distributed,wang2015cooperative} for examples). They mainly focus on minimizing a finite sum of convex functions: $f(x)=\sum_{i=1}^{N}f_i(x)$. Notice that we may write $f(x)=(1/N)\sum_{i=1}^{N}f(x)$, so that problem (\ref{opt Problem_def}) can be seen as a special case of the finite-sum formulation, and the swarming-based scheme (\ref{opt eq: x_i,k_basic}) resembles some of the algorithms in the literature (see \cite{srivastava2011distributed} for example).
	However, to the best of our knowledge, this literature does not address the noise reduction properties stemming from multi-agent coordination. Moreover, they do not consider random sampling times or the real-time performance of the algorithms.
	
	
	Our work is also related to population-based algorithms for simulation-based
	optimization. In these approaches, at every iteration, the quality of each
	solution in the population is assessed, and a new population of solutions is
	randomly generated based upon a given rule that is devised for achieving an
	acceptable trade-off between \textquotedblleft exploration\textquotedblright\
	and \textquotedblleft exploitation\textquotedblright. Recent efforts
	have focused on model-based approaches (see \cite{hu2007model}) which differ
	from population-based methods in that candidate solutions are generated
	at each round by sampling from a \textquotedblleft
	model\textquotedblright, i.e., a probability distribution over the space of
	solutions. The basic idea is to adjust the model based on the sampled
	solutions in order to bias the future search towards regions containing solutions of higher
	qualities (see \cite{hu2015model} for a recent survey). These methods are inherently
	{\em centralized} in that the updating of populations (or models) is performed
	after the quality of {\em all} candidate solutions is assessed.
	
	In \cite{pu2017flocking} we have also considered a swarming-type stochastic optimization method. In the paper we used stochastic differential equations to approximate the real-time optimization process. This approach relies on the assumption that step sizes are arbitrarily close to zero. Moreover, finite-time performance was obtained only for strongly convex functions.
	
	\section{Preliminary Analysis}
	\label{sec: pre anal}
	
	In this section we study the stochastic processes $\{x_{i,k}:k>0\}$ associated with each one of the $N>1$ threads in the swarming-based approach.
	The average solution $\bar{x}_{k}:=({1}/{N})\sum\nolimits_{i=1}^{N}x_{i,k}$ will play an important role in characterizing the performance. 
	This part of the analysis demonstrates the cohesiveness among solutions identified by different threads. To this end, we will analyze the process $\{\overline{V}_k:k>0\}$ defined as
	\[
	\overline{V}_k:=\frac{1}{N}\sum_{i=1}^{N}\Vert x_{i,k}-\bar{x}_k\Vert ^{2}.
	\]%
	 Let $e_{i,k}:=x_{i,k}-\bar{x}_{k}$ and $V_{i,k}:=\Vert e_{i,k}\Vert ^{2}$.
	 Then
	$ \overline{V}_k=({1}/{N})\sum_{i=1}^{N}V_{i,k}$. 
	\begin{lemma}
		 Under Algorithm (\ref{opt eq: x_i,k_basic}), suppose Assumption \ref{asp: exponential} holds.
			\begin{multline}
			\label{eq: V_{k+1}}
			\oV_{k+1}	= \oV_k-\frac{2\tg}{N}\sum_{i=1}^{N}\nabla^T f(x_{i,k})e_{i,k}\mathds{1}_{i,k}-\frac{2a\tg}{N}\sum_{i=1}^{N}\sum_{j=1,j\neq i}^N \alpha_{ij}e_{i,k}^T(e_{i,k}-e_{j,k})\mathds{1}_{i,k}\\
			+\frac{2\tg}{N}\sum_{i=1}^{N}e_{i,k}^T\varepsilon_{i,k}\mathds{1}_{i,k}
			 +\frac{\tg^2}{N}\sum_{i=1}^{N}\|\delta_{i,k}\|^2,
			\end{multline}
						where
									\begin{align}
										\label{delta}
									\delta_{i,k}=\delta_{i,k}^f+\delta_{i,k}^g+\delta_{i,k}^n,
									\end{align} 
								with
								\begin{equation}
								\label{delta^fgn}
												\left\{	\begin{aligned}
								\delta_{i,k}^f= & -\nabla f(x_{i,k})\mathds{1}_{i,k}+\frac{1}{N}\sum_{j=1}^{N}\nabla f(x_{j,k})\mathds{1}_{j,k},\\
								\delta_{i,k}^g= & -a\sum_{j=1,j\neq i}^N \alpha_{ij}(x_{i,k}-x_{j,k})\mathds{1}_{i,k}+\frac{a}{N}\sum_{j=1}^{N}\sum_{l=1,l\neq
									j}^{N}\alpha_{jl}(x_{j,k}-x_{l,k})\mathds{1}_{j,k}, \\
								\delta_{i,k}^n= & \varepsilon_{i,k}\mathds{1}_{i,k}-\frac{1}{N} \sum_{j=1}^{N} \varepsilon_{j,k}\mathds{1}_{j,k}.
								\end{aligned}\right.
								\end{equation}
								\label{lem1}
	\end{lemma}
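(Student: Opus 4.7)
The identity is a direct algebraic consequence of the update rule \eqref{opt eq: x_i,k}, so my plan is to expand $\oV_{k+1}$ step by step using two elementary facts: (i) the displacements $e_{i,k}$ sum to zero across threads, and (ii) $x_{i,k}-x_{j,k}=e_{i,k}-e_{j,k}$, since $\ox_k$ cancels out of every pairwise difference.

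First I would average \eqref{opt eq: x_i,k} over $i$ to get $\ox_{k+1}=\ox_k+(\tg/N)\sum_{j=1}^{N}u_{j,k}\mathds{1}_{j,k}$, and subtract from \eqref{opt eq: x_i,k} to obtain the one-step recursion for the displacements,
\[
e_{i,k+1}=e_{i,k}+\tg\,\delta_{i,k},\qquad \delta_{i,k}\;:=\;u_{i,k}\mathds{1}_{i,k}-\frac{1}{N}\sum_{j=1}^{N}u_{j,k}\mathds{1}_{j,k}.
\]
Plugging in the three pieces of $u_{i,k}$ (gradient, attraction, noise) and splitting term by term immediately reproduces the decomposition $\delta_{i,k}=\delta_{i,k}^f+\delta_{i,k}^g+\delta_{i,k}^n$ stated in \eqref{delta}--\eqref{delta^fgn}.

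Next I would square, obtaining $V_{i,k+1}=V_{i,k}+2\tg\,e_{i,k}^T\delta_{i,k}+\tg^2\|\delta_{i,k}\|^2$, and average over $i$ to reach
\[
\oV_{k+1}=\oV_k+\frac{2\tg}{N}\sum_{i=1}^{N}e_{i,k}^T\delta_{i,k}+\frac{\tg^2}{N}\sum_{i=1}^{N}\|\delta_{i,k}\|^2.
\]
Because $\sum_{i}e_{i,k}=0$, the ``mean'' contribution in each of $\delta_{i,k}^f$, $\delta_{i,k}^g$, $\delta_{i,k}^n$ is annihilated after the inner product with $e_{i,k}$ is summed over $i$; only the ``self'' parts survive. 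Substituting $x_{i,k}-x_{j,k}=e_{i,k}-e_{j,k}$ inside the attraction piece, the three surviving linear terms are precisely those displayed in \eqref{eq: V_{k+1}}.

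The derivation is bookkeeping rather than a genuinely hard argument: the identity holds path-wise for each realization of $(\mathds{1}_{i,k},\varepsilon_{i,k})$, and Assumption \ref{asp: exponential} enters only implicitly, to justify the ``one-thread-per-tick'' form of \eqref{opt eq: x_i,k}, not the algebra itself. The only place I expect to need care is in tracking the signs during the cross-term cancellations, in particular confirming that the minus sign on the attraction piece of $\delta_{i,k}^g$ propagates correctly and that the attraction sum is expressed against $e_{i,k}-e_{j,k}$ rather than the raw state differences. No Cauchy--Schwarz estimate, conditional expectation, or use of Assumption \ref{asp: gradient samples} or \ref{asp: network} is needed at this stage — those will be invoked only later, when \eqref{eq: V_{k+1}} is used to bound $\mathbb{E}[\oV_{k+1}]$.
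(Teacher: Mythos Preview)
Your proposal is correct and follows essentially the same route as the paper: derive $e_{i,k+1}=e_{i,k}+\tg\delta_{i,k}$ by subtracting the averaged update from \eqref{opt eq: x_i,k}, square, average over $i$, and then use $\sum_i e_{i,k}=0$ together with $x_{i,k}-x_{j,k}=e_{i,k}-e_{j,k}$ to simplify the cross terms. The paper carries out exactly this computation, writing out each of $\sum_i e_{i,k}^T\delta_{i,k}^f$, $\sum_i e_{i,k}^T\delta_{i,k}^g$, $\sum_i e_{i,k}^T\delta_{i,k}^n$ explicitly to exhibit the cancellations you describe.
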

	\begin{proof}
See Section \ref{subsec proof_lem1}.
	\end{proof}
		Let $\mathcal{L}=[l_{ij}]$ be	the Laplacian matrix associated with the adjacency matrix $A$, where $l_{ii}=\sum_j a_{ij}$ and $l_{ij}=-a_{ij}$ when $i\neq j$. For an undirected graph, the Laplacian matrix is symmetric positive semi-definite \cite{godsil2013algebraic}. 		Let $\me_k=[e_{1,k}^T,\ldots,e_{N,k}^T]^T$. 
		\begin{multline}
		\sum_{i=1}^{N}\sum_{j=1,j\neq
			i}^{N}\alpha_{ij}e_{i,k}^T(e_{i,k}-e_{j,k}) =-\sum_{i=1}^{N}\sum_{j=1}^{N}l_{ij}e_{i,k}^T(e_{i,k}-e_{j,k})
		=\sum_{i=1}^{N}\sum_{j=1}^{N}l_{ij}e_{j,k}^T e_{i,k} \\
		=\me_k^T(L\otimes I_m)\me_k\ge \lambda_2\me_k^T \me_k=\lambda_2\sum_{i=1}^{N}\|e_{i,k}\|^2,
		\label{eq: sum1}
		\end{multline}
		where $\lambda_2:=\lambda_2(\mathcal{L})$ is the second-smallest eigenvalue of $\mathcal{L}$, also called the {\em algebraic connectivity} of $\mathcal{G}$ (see \cite{godsil2013algebraic}).

	\begin{lemma}
			\label{lem2}
				Suppose Assumptions \ref{asp: gradient samples}, \ref{asp: network} and \ref{asp: exponential} hold. Under Algorithm (\ref{opt eq: x_i,k_basic}), let $\mathbf{x}_k =[x_{1,k},x_{2,k},\ldots,x_{N,k}]^T$, and let $\mathbb{E}[\oV_{k+1}|\mathbf{x}_k]$ denote the conditional expectation of $\oV_{k+1}$ given $\mathbf{x}_k$. Then,
			\begin{multline}
			\label{neq:doV}
			\mathbb{E}[\oV_{k+1}|\mathbf{x}_k] \le \oV_k-\frac{2\tg }{N^2}\sum_{i=1}^{N}\nabla^T f(x_{i,k})e_{i,k}-\frac{2}{N}a\lambda_2\tg \oV_k\\
			+\frac{\tg^2}{N^2}\sum_{i=1}^{N}\|\nabla f(x_{i,k})+a\sum_{j=1,j\neq
				i}^{N}\alpha_{ij}(x_{i,k}-x_{j,k})\|^2+\frac{\tg^2\sigma^2}{N}.
			\end{multline}
		\end{lemma}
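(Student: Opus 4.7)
The plan is to take the conditional expectation of the identity (\ref{eq: V_{k+1}}) from Lemma \ref{lem1} term by term, using two independence facts from Assumption \ref{asp: exponential} and Assumption \ref{asp: gradient samples}: (a) under i.i.d.\ exponential sampling times the indicator $\mathds{1}_{i,k}$ satisfies $\mathbb{E}[\mathds{1}_{i,k}\mid\mathbf{x}_k]=1/N$, and (b) the indicator $\mathds{1}_{j,k}$ depends only on sampling times, so it is independent of the noise vectors $\varepsilon_{j,k}$, which are conditionally mean-zero with variance bounded by $\sigma^2$.

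First I would handle the linear terms on the right hand side of (\ref{eq: V_{k+1}}). The gradient term gives $-\frac{2\tg}{N^2}\sum_i \nabla^T f(x_{i,k})e_{i,k}$ after multiplying by $\mathbb{E}[\mathds{1}_{i,k}\mid\mathbf{x}_k]=1/N$. The attraction term is exactly the expression bounded from below by $\lambda_2\sum_i\|e_{i,k}\|^2=\lambda_2 N\oV_k$ in (\ref{eq: sum1}); combined with the $1/N$ from the indicator this contributes $-\frac{2a\lambda_2\tg}{N}\oV_k$. The cross noise term $\frac{2\tg}{N}\sum_i e_{i,k}^T\varepsilon_{i,k}\mathds{1}_{i,k}$ vanishes in expectation, because $\mathds{1}_{i,k}\perp \varepsilon_{i,k}$ given $\mathbf{x}_k$ and $\mathbb{E}[\varepsilon_{i,k}\mid\mathbf{x}_k]=0$.

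The main work is the bound on $\frac{\tg^2}{N}\sum_i \mathbb{E}[\|\delta_{i,k}\|^2\mid\mathbf{x}_k]$. Writing $h_{i,k}:=\nabla f(x_{i,k})+a\sum_{j\neq i}\alpha_{ij}(x_{i,k}-x_{j,k})$, I would first observe that $\delta_{i,k}^f+\delta_{i,k}^g$ has exactly the form $v_i-\bar v$ with $v_i=-h_{i,k}\mathds{1}_{i,k}$, and $\delta_{i,k}^n$ has the form $w_i-\bar w$ with $w_i=\varepsilon_{i,k}\mathds{1}_{i,k}$. The elementary identity
\[
\sum_{i=1}^N\|u_i-\bar u\|^2=\sum_{i=1}^N\|u_i\|^2-N\|\bar u\|^2\le\sum_{i=1}^N\|u_i\|^2
\]
then gives $\sum_i\|\delta_{i,k}^f+\delta_{i,k}^g\|^2\le\sum_i\|h_{i,k}\|^2\mathds{1}_{i,k}$ and $\sum_i\|\delta_{i,k}^n\|^2\le\sum_i\|\varepsilon_{i,k}\|^2\mathds{1}_{i,k}$. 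Since exactly one indicator equals $1$ at step $k$, taking conditional expectations yields $\mathbb{E}[\sum_i\|h_{i,k}\|^2\mathds{1}_{i,k}\mid\mathbf{x}_k]=\frac{1}{N}\sum_i\|h_{i,k}\|^2$ and, by independence of the active indicator from the noise together with $\mathbb{E}[\|\varepsilon_{i,k}\|^2\mid\mathbf{x}_k]\le\sigma^2$, $\mathbb{E}[\sum_i\|\varepsilon_{i,k}\|^2\mathds{1}_{i,k}\mid\mathbf{x}_k]\le\sigma^2$. Finally, using $\|\delta_{i,k}\|^2=\|\delta_{i,k}^f+\delta_{i,k}^g\|^2+2(\delta_{i,k}^f+\delta_{i,k}^g)^T\delta_{i,k}^n+\|\delta_{i,k}^n\|^2$ and conditioning on $(\mathbf{x}_k,\{\mathds{1}_{j,k}\}_j)$, the cross term vanishes because $\delta_{i,k}^f+\delta_{i,k}^g$ becomes deterministic while $\mathbb{E}[\delta_{i,k}^n\mid\mathbf{x}_k,\{\mathds{1}_{j,k}\}_j]=0$. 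Assembling gives $\frac{\tg^2}{N}\sum_i\mathbb{E}[\|\delta_{i,k}\|^2\mid\mathbf{x}_k]\le\frac{\tg^2}{N^2}\sum_i\|h_{i,k}\|^2+\frac{\tg^2\sigma^2}{N}$, matching the last two terms of (\ref{neq:doV}).

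The main obstacle I anticipate is bookkeeping around the indicator–noise independence: one has to be careful that $\mathds{1}_{i,k}$ is determined by the minimum-time argmin over sampling clocks (not the sample values), so conditioning on the full indicator vector leaves the $\varepsilon_{j,k}$'s mean-zero with variance $\le\sigma^2$. Once this is spelled out, the rest is an application of the variance-reduction identity and (\ref{eq: sum1}), and the inequality (\ref{neq:doV}) follows by summing the five contributions.
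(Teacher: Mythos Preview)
Your proposal is correct and follows the same overall strategy as the paper: take the conditional expectation of the identity in Lemma~\ref{lem1}, use $\mathbb{E}[\mathds{1}_{i,k}\mid\mathbf{x}_k]=1/N$ on the linear terms, invoke (\ref{eq: sum1}) for the Laplacian term, and split $\delta_{i,k}$ into its $(f,g)$-part and its noise part to bound the quadratic contribution.

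The one genuine difference is in how you bound $\sum_i\mathbb{E}[\|\delta_{i,k}^f+\delta_{i,k}^g\|^2\mid\mathbf{x}_k]$ and its noise analogue. The paper expands each $\|\delta_{i,k}^f+\delta_{i,k}^g\|^2$ directly, exploits the disjointness $\mathds{1}_{i,k}\mathds{1}_{j,k}=0$ for $i\neq j$ to kill all cross terms, and computes the exact value $\frac{N-1}{N^2}\sum_i\|h_{i,k}\|^2$ before upper-bounding by $\frac{1}{N}\sum_i\|h_{i,k}\|^2$. You instead recognize the centered-sum structure $\delta_{i,k}^f+\delta_{i,k}^g=v_i-\bar v$ and apply the variance identity $\sum_i\|u_i-\bar u\|^2\le\sum_i\|u_i\|^2$ in one line; the indicator then contributes its $1/N$ only at the expectation stage. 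Your route is shorter and avoids the explicit $(1-1/N)^2$ bookkeeping, at the (harmless) cost of losing the sharper constant $\frac{N-1}{N^2}$ that the paper also discards. Your handling of the cross term via conditioning on $(\mathbf{x}_k,\{\mathds{1}_{j,k}\}_j)$ is likewise more explicit than the paper's one-line appeal to ``the zero mean property of $\varepsilon_{i,k}$''.
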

	\begin{proof}
	See Appendix \ref{subsec proof_lem2}.
	\end{proof}
\begin{remark}
	Lemma \ref{lem2} sheds light on the cohesiveness property of solutions obtained by different threads. When $f(\cdot)$ satisfies proper regulatory conditions, $\oV_k$ is expected to decrease once exceeding a certain value. As a result, $\oV_k$ is bounded in expectation so that $x_{i,k}$ are not too different from each other.
	As we shall see in the next section, Lemma \ref{lem2} helps us characterize the superior performance of the swarming-based approach.
	\end{remark}

%

\section{Main Results}
\label{sec: main}

In this section we formalize the superior properties of the swarming-based framework. 
	The following additional assumptions will be used.
\begin{assumption}
	\label{asp:Lipschitz} {(Lipschitz) $\| \nabla f(x)-\nabla f(x^{\prime})\|\leq
		L\|x-x^{\prime}\|$ for some $L> 0$ and for all $x,x^{\prime}$}.
\end{assumption}
\begin{assumption}
	\label{asp:gradient_strconvexity} (Strong convexity) $(\nabla f (x) -\nabla
	f (x^{\prime}))^T (x-x^{\prime}) \geq \kappa \|x-x^{\prime}\|^{2}$ for some 
	$\kappa>0$ and for all $x,x^{\prime}$.
\end{assumption}
\begin{assumption}
	\label{asp:gradient_convexity} (Convexity) $f (x) \ge
	f (x^{\prime}) + \nabla^T f (x^{\prime})(x-x^{\prime})$ for all $x,x^{\prime}$.
\end{assumption}

Let us introduce a measure $U_k:=\Vert \bar{x}_k-x^{\ast }\Vert ^{2}$, of the distance between the average solution identified by all threads at step $k$ and an optimal solution $x^{\ast }$. We present some additional lemmas below.
\begin{lemma}
	Suppose Assumptions \ref{asp: gradient samples} and \ref{asp: exponential} hold. Under Algorithm (\ref{opt eq: x_i,k_basic}),
	\begin{multline}
		\label{eq: E U_{k+1}}
		\mathbb{E}[U_{k+1} | \mathbf{x}_k]
		\le U_k-\frac{2\tg}{N^2}\sum_{i=1}^{N}\nabla^T f(x_{i,k}) (x_{i,k}-x^{\ast })+\frac{2\tg}{N^2}\sum_{i=1}^{N}\nabla^T f(x_{i,k}) (x_{i,k}-\ox_k)\\
		+ \frac{\tg^2}{N^3}\sum_{i=1}^{N}\Vert\nabla f(x_{i,k})+a\sum_{j=1,j\neq
			i}^{N}\alpha_{ij}(x_{i,k}-x_{j,k})\Vert^2+\frac{\tg^2}{N^2}\sigma^2.
	\end{multline}
\label{lem eq: E U_{k+1}}
	\end{lemma}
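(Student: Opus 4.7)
The plan is to mimic the expansion used for $\oV_{k+1}$ in Lemma \ref{lem1}, but instead of tracking cohesion of individual threads around $\ox_k$, I would track the squared distance of $\ox_k$ itself from $x^{\ast}$. First I would average the update rule (\ref{opt eq: x_i,k}) over $i$ to obtain
\[
\ox_{k+1}=\ox_k+\frac{\tg}{N}\sum_{i=1}^{N}u_{i,k}\mathds{1}_{i,k},
\]
subtract $x^{\ast}$ from both sides, and expand the square to write
\[
U_{k+1}=U_k+\frac{2\tg}{N}(\ox_k-x^{\ast})^{T}\sum_{i=1}^{N}u_{i,k}\mathds{1}_{i,k}+\frac{\tg^{2}}{N^{2}}\Bigl\|\sum_{i=1}^{N}u_{i,k}\mathds{1}_{i,k}\Bigr\|^{2}.
\]

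Next I would take $\mathbb{E}[\,\cdot\,|\mathbf{x}_k]$. Three observations drive the computation: (i) under Assumption \ref{asp: exponential} exactly one thread updates per tick, so $\mathds{1}_{i,k}\mathds{1}_{j,k}=0$ for $i\neq j$ and $\mathbb{E}[\mathds{1}_{i,k}]=1/N$; (ii) by Assumption \ref{asp: gradient samples}, $\mathbb{E}[\varepsilon_{i,k}\mid\mathbf{x}_k]=0$ and $\mathbb{E}[\|\varepsilon_{i,k}\|^{2}\mid\mathbf{x}_k]\le\sigma^{2}$; (iii) because the indicators are orthogonal, $\|\sum_i u_{i,k}\mathds{1}_{i,k}\|^{2}=\sum_i\|u_{i,k}\|^{2}\mathds{1}_{i,k}$, so the quadratic term yields
\[
\frac{\tg^{2}}{N^{2}}\,\mathbb{E}\!\Bigl[\sum_i\|u_{i,k}\|^{2}\mathds{1}_{i,k}\,\Big|\,\mathbf{x}_k\Bigr]\le \frac{\tg^{2}}{N^{3}}\sum_{i=1}^{N}\Bigl\|\nabla f(x_{i,k})+a\!\sum_{j\neq i}\alpha_{ij}(x_{i,k}-x_{j,k})\Bigr\|^{2}+\frac{\tg^{2}\sigma^{2}}{N^{2}},
\]
where the noise cross terms vanish by (ii) and the $\sigma^2$ contribution comes from the variance bound summed over $i$ with weight $1/N^{3}$ times $N$.

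For the linear term, taking expectations kills $\varepsilon_{i,k}$ and gives
\[
\frac{2\tg}{N^{2}}(\ox_k-x^{\ast})^{T}\sum_{i=1}^{N}\Bigl[-\nabla f(x_{i,k})-a\!\sum_{j\neq i}\alpha_{ij}(x_{i,k}-x_{j,k})\Bigr].
\]
A key simplification is that the swarming contribution cancels: by Assumption \ref{asp: network}, $A$ is symmetric, so $\sum_i\sum_{j\neq i}\alpha_{ij}(x_{i,k}-x_{j,k})=0$ (this is the $N$-fold analog of the identity used in (\ref{eq: sum1})). Therefore the linear term reduces to $-\frac{2\tg}{N^{2}}\sum_i \nabla^{T}f(x_{i,k})(\ox_k-x^{\ast})$.

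Finally I would split $\ox_k-x^{\ast}=(x_{i,k}-x^{\ast})-(x_{i,k}-\ox_k)$ inside each summand, which produces the two first-order terms appearing in (\ref{eq: E U_{k+1}}). Assembling the three contributions yields the claimed inequality. The only delicate step is the symmetry-based cancellation of the swarming drift in the linear term, but this is exactly the same algebraic identity that underpins (\ref{eq: sum1}), so I expect no additional obstacle beyond bookkeeping of the indicators.
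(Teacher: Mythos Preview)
Your proposal is correct and follows essentially the same route as the paper's proof: expand $U_{k+1}$ from the averaged update (\ref{eq:macro}), use $\mathds{1}_{i,k}\mathds{1}_{j,k}=0$ together with $\mathbb{E}[\mathds{1}_{i,k}]=1/N$ to evaluate the conditional expectation, drop the attraction contribution in the linear term by symmetry of $A$, bound the quadratic term via the noise variance, and finally split $\ox_k-x^{\ast}=(x_{i,k}-x^{\ast})-(x_{i,k}-\ox_k)$. The only cosmetic remark is that the cancellation $\sum_i\sum_{j\neq i}\alpha_{ij}(x_{i,k}-x_{j,k})=0$ is not the identity in (\ref{eq: sum1}) (that one is a Laplacian quadratic-form lower bound) but simply the fact that $A=A^{T}$ makes the double sum telescope; the paper uses this step tacitly just as you do, even though Assumption~\ref{asp: network} is not listed among the lemma's hypotheses.
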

\begin{proof}
See Appendix \ref{subsec proof_lem eq: E U_{k+1}}.
	\end{proof}

\begin{lemma}
		Let $\deg(i)$ denote the degree of vertex $i$ in graph $\mathcal{G}$ and let $\overline{d}:=\max_i \deg(i)$.
	\begin{align}
	\label{nabla+a}
	\sum_{i=1}^{N}\Vert\nabla f(x_{i,k})+a\sum_{j=1,j\neq
		i}^{N}\alpha_{ij}(x_{i,k}-x_{j,k})\Vert^2\le 2\sum_{i=1}^{N}\Vert\nabla f(x_{i,k})\Vert^2+8a^2N\overline{d}^2\oV_k.
	\end{align}
\label{lem nabla+a}
\end{lemma}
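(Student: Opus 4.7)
The plan is to bound the quantity by repeated applications of the elementary inequalities $\|u+v\|^2\le 2\|u\|^2+2\|v\|^2$ and the Cauchy--Schwarz inequality, then convert the resulting $\|x_{i,k}-x_{j,k}\|^2$ terms to the average deviation $\oV_k$ via the substitution $x_{i,k}-x_{j,k}=e_{i,k}-e_{j,k}$.

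First I would apply $\|u+v\|^2\le 2\|u\|^2+2\|v\|^2$ term-by-term to separate the gradient from the swarming force, yielding
\begin{equation*}
\sum_{i=1}^N\Bigl\|\nabla f(x_{i,k})+a\sum_{j\neq i}\alpha_{ij}(x_{i,k}-x_{j,k})\Bigr\|^2 \le 2\sum_{i=1}^N\|\nabla f(x_{i,k})\|^2+2a^2\sum_{i=1}^N\Bigl\|\sum_{j\neq i}\alpha_{ij}(x_{i,k}-x_{j,k})\Bigr\|^2.
\end{equation*}
It then suffices to show that the last sum is at most $4\overline{d}^2 N\,\oV_k$.

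Next, I would use $x_{i,k}-x_{j,k}=e_{i,k}-e_{j,k}$ and apply Cauchy--Schwarz with the identity $\sum_{j\neq i}\alpha_{ij}=\deg(i)$ (valid since $\alpha_{ij}\in\{0,1\}$):
\begin{equation*}
\Bigl\|\sum_{j\neq i}\alpha_{ij}(e_{i,k}-e_{j,k})\Bigr\|^2 \le \deg(i)\sum_{j\neq i}\alpha_{ij}\|e_{i,k}-e_{j,k}\|^2 \le 2\deg(i)\sum_{j\neq i}\alpha_{ij}\bigl(\|e_{i,k}\|^2+\|e_{j,k}\|^2\bigr).
\end{equation*}
Summing over $i$ and using $\deg(i)\le\overline{d}$, the first piece contributes $2\overline{d}^2\sum_i\|e_{i,k}\|^2$, while the second piece becomes
\begin{equation*}
2\sum_i\deg(i)\sum_{j\neq i}\alpha_{ij}\|e_{j,k}\|^2 \le 2\overline{d}\sum_j\|e_{j,k}\|^2\sum_{i\neq j}\alpha_{ij}=2\overline{d}\sum_j\deg(j)\|e_{j,k}\|^2 \le 2\overline{d}^2\sum_j\|e_{j,k}\|^2,
\end{equation*}
where I swapped the order of summation and used symmetry $\alpha_{ij}=\alpha_{ji}$ from Assumption~\ref{asp: network}. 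Combining these and using $\sum_i\|e_{i,k}\|^2=N\oV_k$ delivers the desired bound $4\overline{d}^2 N\,\oV_k$, and plugging back into the first display completes the proof.

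This is essentially a mechanical calculation; the only subtle step is the double sum swap in converting $\sum_i\deg(i)\sum_{j\neq i}\alpha_{ij}\|e_{j,k}\|^2$ into something controlled by $\overline{d}^2\sum_j\|e_{j,k}\|^2$, which requires the symmetry of $A$ so that the inner degree count is over $j$ rather than over $i$. I do not anticipate any genuine obstacle.
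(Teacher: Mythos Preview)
Your argument is correct and follows essentially the same route as the paper's own proof: both separate the gradient and swarming terms via $\|u+v\|^2\le 2\|u\|^2+2\|v\|^2$, apply Cauchy--Schwarz to the inner sum to extract a factor $\deg(i)$, and then use $\|e_{i,k}-e_{j,k}\|^2\le 2(\|e_{i,k}\|^2+\|e_{j,k}\|^2)$ together with the degree bound to reach $4\overline d^{\,2}N\oV_k$. The only cosmetic difference is that the paper bounds $\deg(i)\le\overline d$ before splitting $\|e_{i,k}-e_{j,k}\|^2$, whereas you split first and then bound degrees in each piece separately; both orderings yield the same constant.
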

\begin{proof}
See Appendix \ref{subsec proof_lem nabla+a}.
\end{proof}

\begin{lemma}
	Suppose Assumptions \ref{asp: gradient samples}, \ref{asp: network}, \ref{asp: exponential}, \ref{asp:Lipschitz}, and \ref{asp:gradient_convexity} hold. Under Algorithm (\ref{opt eq: x_i,k_basic}), let $\omega\in(0,1)$ be arbitrary. Then,
		 \begin{multline}
		 \label{EUwV}
	     \mathbb{E}[U_{k+1}+\omega \oV_{k+1}| \mathbf{x}_k]
		 \le   U_k+\omega\oV_k-2\left[\frac{\tg}{N^2}-(1+\omega N)\frac{L\tg^2}{N^3}\right]\sum_{i=1}^{N}\nabla^T f(x_{i,k}) (x_{i,k}-x^{\ast })\\
		 +\frac{2}{N}(1-\omega)L\tg \oV_k-\frac{2}{N}\omega a\lambda_2\tg\oV_k+ \frac{8}{N^2}(1+\omega N) a^2\overline{d}^2\tg^2\oV_k
		 +(1+\omega N)\frac{\tg^2\sigma^2}{N^2}.
		 \end{multline}
	 \label{lem EUwV}
\end{lemma}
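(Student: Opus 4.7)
The plan is to produce the bound by taking the linear combination $\mathbb{E}[U_{k+1}|\mathbf{x}_k] + \omega\,\mathbb{E}[\oV_{k+1}|\mathbf{x}_k]$ using the inequalities of Lemma \ref{lem eq: E U_{k+1}} and Lemma \ref{lem2} respectively, and then consolidating the three types of cross terms that appear: an $(x_{i,k}-x^*)$-inner-product term, an $e_{i,k}$-inner-product term, and the quadratic term $\sum_i \|\nabla f(x_{i,k})+a\sum_{j\neq i}\alpha_{ij}(x_{i,k}-x_{j,k})\|^2$. The noise contributions combine immediately as $\tg^2\sigma^2/N^2 + \omega\,\tg^2\sigma^2/N = (1+\omega N)\tg^2\sigma^2/N^2$, and the $-2\omega a\lambda_2\tg\oV_k/N$ term is inherited directly from \eqref{neq:doV}.

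After adding the two bounds, the $e_{i,k}$-cross terms combine as
\begin{equation*}
\frac{2\tg}{N^2}\sum_{i=1}^{N}\nabla^T f(x_{i,k})(x_{i,k}-\ox_k) - \omega\,\frac{2\tg}{N^2}\sum_{i=1}^{N}\nabla^T f(x_{i,k})e_{i,k} = \frac{2(1-\omega)\tg}{N^2}\sum_{i=1}^{N}\nabla^T f(x_{i,k})e_{i,k}.
\end{equation*}
The key reduction here is that $\sum_i e_{i,k}=0$, so subtracting the constant vector $\nabla f(\ox_k)$ gives $\sum_i \nabla^T f(x_{i,k})e_{i,k} = \sum_i (\nabla f(x_{i,k})-\nabla f(\ox_k))^T e_{i,k}$. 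Applying Cauchy--Schwarz together with Assumption \ref{asp:Lipschitz} and $\|x_{i,k}-\ox_k\|=\|e_{i,k}\|$ bounds this sum by $L\sum_i\|e_{i,k}\|^2 = LN\oV_k$, yielding the $+\frac{2}{N}(1-\omega)L\tg\oV_k$ term in the conclusion.

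For the quadratic term, the coefficient after the linear combination is $(1+\omega N)\tg^2/N^3$. Applying Lemma \ref{lem nabla+a} splits this into two parts. The part proportional to $8a^2N\overline{d}^2\oV_k$ gives the $+\frac{8(1+\omega N)a^2\overline{d}^2\tg^2\oV_k}{N^2}$ contribution in the statement. For the remaining $\sum_i\|\nabla f(x_{i,k})\|^2$ piece, I would invoke co-coercivity of $\nabla f$ (a consequence of Assumptions \ref{asp:Lipschitz} and \ref{asp:gradient_convexity}, i.e., the Baillon--Haddad theorem), namely $\|\nabla f(x)\|^2 \le L\,\nabla^T f(x)(x-x^*)$. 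This converts the $\|\nabla f(x_{i,k})\|^2$ sum into $L\sum_i \nabla^T f(x_{i,k})(x_{i,k}-x^*)$, which merges with the pre-existing $-\frac{2\tg}{N^2}\sum_i \nabla^T f(x_{i,k})(x_{i,k}-x^*)$ to produce the exact coefficient $-2\bigl[\tg/N^2 - (1+\omega N)L\tg^2/N^3\bigr]$ on the right-hand side.

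The main obstacle I anticipate is matching the precise constant on the $(x_{i,k}-x^*)$ term: using the looser bound $\|\nabla f(x)\|^2 \le 2L(f(x)-f(x^*))$ together with convexity would give the inflated coefficient $2L$ rather than $L$, spoiling the cancellation. Using the sharp co-coercivity inequality is therefore essential. Once that is recognized, the remainder of the argument is algebraic bookkeeping to collect the $\oV_k$ coefficients, the $\sigma^2$ terms, and the $\nabla^T f(x_{i,k})(x_{i,k}-x^*)$ coefficient into the form stated in \eqref{EUwV}.
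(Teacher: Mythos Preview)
Your proposal is correct and follows essentially the same route as the paper: combine Lemmas \ref{lem2} and \ref{lem eq: E U_{k+1}}, apply Lemma \ref{lem nabla+a} to the quadratic term, bound the $e_{i,k}$-cross term via $\sum_i e_{i,k}=0$ and Lipschitz continuity, and use co-coercivity $\|\nabla f(x_{i,k})\|^2\le L\,\nabla^T f(x_{i,k})(x_{i,k}-x^*)$ to merge the gradient-norm piece into the $(x_{i,k}-x^*)$ term. The paper's proof invokes exactly the same inequalities in the same order, so your anticipation that the sharp co-coercivity constant (rather than the $2L$ from the smoothness--convexity pair) is needed is on point.
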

\begin{proof}
See Appendix \ref{subsec proof_lem EUwV}.
\end{proof}
	
\subsection{Strongly-Convex Objective Function}
In this section we will characterize and compare the performances of the swarming-based approach (\ref{opt eq: x_i,k_basic}) and the centralized method (\ref{eq:x_k centralized}) when the objective function $f(\cdot)$ is  strongly convex.
Specifically, we will discuss the ultimate error bounds and convergence speeds achieved under the two approaches. We will show that the two approaches are comparable in their ultimate error bounds while the swarming-based method enjoys a faster convergence.

The following theorem characterizes the performance of the swarming-based approach when the objective function is Lipschitz continuous and strongly convex.
	\begin{theorem}
	\label{thm: strongly convex constant}
	Suppose Assumptions  \ref{asp: gradient samples}, \ref{asp: network}, \ref{asp: exponential}, \ref{asp:Lipschitz}  and \ref{asp:gradient_strconvexity}  hold. Under Algorithm (\ref{opt eq: x_i,k_basic}) with step size satisfying
	\begin{equation}
	\gamma<\min\left(\frac{N}{(1+\hat{\omega} N)L},\frac{N}{2\kappa},\frac{N\lambda_2}{4a(N+1)\bar{d}^2}\right),
	\label{gamma bound strong convex}
	\end{equation}
we have
	\begin{multline}
	\label{EU_k thm1}
		\mathbb{E}[\Vert \bar{x}_k-x^{\ast }\Vert ^{2}]\le \phi_k=\frac{(1+\hat{\omega} N)\tg \sigma^2}{2\kappa N-2\kappa(1+\hat{\omega}N) L\gamma}\\
		+\left[U_0+\hat{\omega}\oV_0-\frac{(1+\hat{\omega} N)\tg \sigma^2}{2\kappa N-2\kappa(1+\hat{\omega}N) L\gamma}\right](1-C)^k.
	\end{multline}
Here $\hat{\omega}\in(0,1)$ is the solution to
\begin{multline}\label{hat_omega_equation}
	\kappa L\gamma\hat{\omega}^2-\left[\kappa+\frac{(N-1)}{N}\kappa L\gamma-L-a\lambda_2+4a^2\overline{d}^2 \tg\right]\hat{\omega}\\
	=-\kappa+\frac{\kappa L\gamma}{N}+L+\frac{4}{N}a^2\overline{d}^2\tg,
\end{multline}
and
\begin{equation}
C=\frac{2}{N}\kappa\tg-\frac{2}{N^2}\kappa(1+\hat{\omega}N) L\gamma^2.
\label{def_C}
\end{equation}
\end{theorem}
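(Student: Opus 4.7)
The plan is to turn Lemma \ref{lem EUwV} into a one-step contraction for the Lyapunov-like quantity $U_k + \omega \oV_k$ and then iterate. Applying strong convexity (Assumption \ref{asp:gradient_strconvexity}) at $x' = x^\ast$ with $\nabla f(x^\ast) = 0$ gives $\nabla^T f(x_{i,k})(x_{i,k}-x^\ast) \geq \kappa\|x_{i,k}-x^\ast\|^2$; summing in $i$ and using the orthogonal decomposition $\sum_{i=1}^{N}\|x_{i,k}-x^\ast\|^2 = N\oV_k + NU_k$ yields $\sum_i \nabla^T f(x_{i,k})(x_{i,k}-x^\ast) \geq \kappa N(\oV_k + U_k)$. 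Plugging this lower bound into the drift term of (\ref{EUwV}) leaves a linear combination of $U_k$ and $\oV_k$, plus the noise term $(1+\omega N)\tg^2\sigma^2/N^2$.

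Next I would choose $\omega = \hat{\omega}$ so that the coefficients of $U_k$ and of $\oV_k$ in the resulting bound produce the \emph{same} contraction factor $1-C$, which gives
\[
\mathbb{E}[U_{k+1}+\hat{\omega}\oV_{k+1}\mid \mx_k] \;\leq\; (1-C)\,(U_k + \hat{\omega}\oV_k) + (1+\hat{\omega}N)\frac{\tg^2\sigma^2}{N^2}.
\]
The coefficient of $U_k$ is automatically $1-C$ with $C$ as in (\ref{def_C}); demanding that the coefficient of $\oV_k$ equal $\hat{\omega}(1-C)$, cancelling the common factor $2\tg/N$, and collecting in $\hat{\omega}$ produces exactly the quadratic (\ref{hat_omega_equation}). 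Iterating the unconditional version of the recursion gives a geometric series whose fixed point is $(1+\hat{\omega}N)\tg\sigma^2/[2\kappa N - 2\kappa(1+\hat{\omega}N)L\gamma]$, and the stated bound on $\mathbb{E}[\|\ox_k-x^\ast\|^2] = \mathbb{E}[U_k]$ then follows from $\hat{\omega}\oV_k \geq 0$ together with the closed-form solution of the linear recursion.

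The main technical obstacle is verifying, under the step-size condition (\ref{gamma bound strong convex}), that (\ref{hat_omega_equation}) admits a root $\hat{\omega} \in (0,1)$ and that the associated $C$ satisfies $0 < C < 1$. Existence of the root follows from an intermediate-value argument on the difference between the two sides of (\ref{hat_omega_equation}): at $\omega = 1$ that difference reduces to $a\lambda_2 - 4(N+1)a^2\bar{d}^2\tg/N$, which is positive precisely under the third bound in (\ref{gamma bound strong convex}); at $\omega = 0$ the difference is negative because $L \geq \kappa$ and $\tg$ is small. The remaining two bounds $\tg < N/[(1+\hat{\omega}N)L]$ and $\tg < N/(2\kappa)$ guarantee respectively that the denominator in the stationary term is positive (equivalently, $C > 0$) and that $C < 1$, so $(1-C)^k \to 0$ and the geometric sum converges. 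A minor self-consistency point is that the first bound in (\ref{gamma bound strong convex}) contains $\hat{\omega}$; this is handled by noting that $\hat{\omega} < 1$ makes the bound monotonically stronger, so any $\tg$ satisfying the stronger inequality $\tg < N/[(1+N)L]$ automatically satisfies it.
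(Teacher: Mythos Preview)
Your proposal is correct and follows essentially the same route as the paper's own proof: apply strong convexity to lower-bound the drift term in Lemma~\ref{lem EUwV}, choose $\hat{\omega}$ so that the $U_k$ and $\oV_k$ coefficients match the common contraction factor $1-C$, iterate the resulting scalar recursion, and drop $\hat{\omega}\oV_k\ge 0$ at the end. In fact you supply more detail than the paper does on why $\hat{\omega}\in(0,1)$ and $C\in(0,1)$ under~(\ref{gamma bound strong convex}); the paper simply asserts this, whereas your intermediate-value computation at $\omega=0$ and $\omega=1$ and your observation about the self-referential first bound (resolved via $\hat{\omega}<1\Rightarrow \tg<N/[(1+N)L]$ suffices) make the argument explicit.
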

	\begin{proof}
See Appendix \ref{subsec proof_thm: strongly convex constant}.
	\end{proof}
\begin{remark}
	 There are two terms on the right hand side of inequality (\ref{EU_k thm1}). The first term is a positive constant, while
	the second term converges to $0$ exponentially fast. $C\in(0,1)$ determines the convergence speed. In the long run ($k\rightarrow\infty$), 
	\begin{align*}
	\limsup_k\mathbb{E}[\Vert \bar{x}_k-x^{\ast }\Vert ^{2}] \le \phi^*:= \frac{(1+\hat{\omega} N)\tg \sigma^2}{2\kappa N-2\kappa(1+\hat{\omega}N) L\gamma}.
	\end{align*}
		\end{remark}
The following corollary characterizes the dependency relationship between $\phi^*$ and several parameters including the step size $\tg$, algebraic connectivity $\lambda_2$, noise variance $\sigma^2$, and the convexity factor $\kappa$.
	\begin{corollary}
	Suppose all the conditions in Theorem \ref{thm: strongly convex constant} hold, and
	\begin{equation}
	\label{gamma bound2}
	\tg< \min\left(\frac{a\lambda_2+2L-2\kappa}{2(\kappa L+4a^2\overline{d}^2)}, \frac{2}{L}\right).
	\end{equation}
	Then,
	\begin{align*}
	\phi^*=\mathcal{O}\left(\frac{\sigma^2}{\kappa \lambda_2}\right).
	\end{align*}			
	If in addition $\lambda_2=\Theta(N)$, i.e., $\lambda_2$ is bounded above and below by $N$ asymptotically (e.g., when $\mathcal{G}$ is a complete graph),
	\begin{align*}
	\phi^*=\mathcal{O}\left(\frac{\sigma^2}{\kappa N}\right).
	\end{align*}
	\label{cor:bounds}
\end{corollary}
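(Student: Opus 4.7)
The plan is to begin from the closed-form expression
\[
\phi^* \;=\; \frac{(1+\hat\omega N)\gamma\sigma^2}{2\kappa N - 2\kappa(1+\hat\omega N)L\gamma}
\]
given in Theorem \ref{thm: strongly convex constant}, and reduce the proof to estimating the root $\hat\omega\in(0,1)$ of the quadratic (\ref{hat_omega_equation}). The key observation will be that the corollary's additional step-size hypothesis forces $\hat\omega=\mathcal{O}(1/\lambda_2)$. Combined with the standard fact that the algebraic connectivity of a connected graph on $N$ vertices satisfies $\lambda_2\le N$, this will give
\[
\frac{1+\hat\omega N}{N} \;=\; \frac{1}{N} + \hat\omega \;\le\; \frac{1}{\lambda_2} + \mathcal{O}(1/\lambda_2) \;=\; \mathcal{O}(1/\lambda_2),
\]
which, inserted into the formula for $\phi^*$, yields the first claim; the second then follows immediately upon substituting $\lambda_2=\Theta(N)$.

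To bound $\hat\omega$, I would rewrite (\ref{hat_omega_equation}) as $\kappa L\gamma\hat\omega^2 - B\hat\omega - D = 0$ with
\[
B := \kappa + \tfrac{N-1}{N}\kappa L\gamma - L - a\lambda_2 + 4a^2\overline{d}^2\gamma, \qquad D := L-\kappa + \tfrac{\gamma(\kappa L + 4a^2\overline{d}^2)}{N}.
\]
Since $\kappa$-strong convexity together with $L$-Lipschitz gradients implies $L\ge\kappa$, the quantity $D$ is positive and bounded above by a constant independent of $N$ and $\lambda_2$. A short calculation using the hypothesis $\gamma<(a\lambda_2+2L-2\kappa)/(2(\kappa L + 4a^2\overline{d}^2))$ shows $-B \ge a\lambda_2/2$. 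Since $\hat\omega$ satisfies $\hat\omega(\kappa L\gamma\hat\omega - B) = D$ with both factors on the left positive, dropping the nonnegative term $\kappa L\gamma\hat\omega^2$ yields
\[
\hat\omega \;\le\; \frac{D}{-B} \;\le\; \frac{2D}{a\lambda_2} \;=\; \mathcal{O}(1/\lambda_2).
\]

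The remaining technical step is to verify that the denominator $2\kappa N - 2\kappa(1+\hat\omega N)L\gamma$ of $\phi^*$ stays bounded below by a positive constant multiple of $\kappa N$; this follows by combining the added constraint $\gamma<2/L$ with the bound on $\hat\omega$ just derived, which together force $(1+\hat\omega N)L\gamma/N$ away from $1$. Collecting the estimates gives $\phi^* = \mathcal{O}(\sigma^2/(\kappa\lambda_2))$, and the corollary's second assertion is immediate. The main obstacle I foresee is the quadratic analysis of $\hat\omega$ — specifically, establishing the bound $-B\ge a\lambda_2/2$ in a form that cleanly isolates the role of the extra step-size hypothesis and produces a constant in the $\mathcal{O}(\cdot)$ that does not secretly depend on $\lambda_2$ or $N$. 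Once this is in place, the rest is algebraic bookkeeping.
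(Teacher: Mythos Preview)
Your proposal is correct and follows essentially the same approach as the paper's proof: both arguments use the extra step-size hypothesis to show $-B\ge a\lambda_2/2$, drop the nonnegative quadratic term $\kappa L\gamma\hat\omega^2$ to obtain $\hat\omega\le D/(-B)=\mathcal{O}(1/\lambda_2)$, and then substitute into $\phi^*$. Your write-up is somewhat more explicit than the paper's (you spell out the use of $\lambda_2\le N$ and address the denominator bound, both of which the paper leaves implicit), but the core idea is identical.
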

\begin{proof}
	By (\ref{gamma bound2}),
	\begin{equation*}
	\kappa L\tg+4a^2\overline{d}^2 \tg<\frac{1}{2}a\lambda_2+L-\kappa.
	\end{equation*}
	According to definition (\ref{hat_omega_equation}), 
	\begin{equation*}
	\frac{1}{2}a\lambda_2\hat{\omega}\le -\kappa+\frac{\kappa L\gamma}{N}+L+\frac{4}{N}a^2\overline{d}^2\tg\le -\kappa+L+\frac{1}{N}\left(\frac{1}{2}a\lambda_2+L-\kappa\right).
	\end{equation*}
	Hence $\hat{\omega}=\mathcal{O}(1/\lambda_2)$, and
	\begin{align*}
	\phi^*=\frac{(1+\hat{\omega} N)\tg \sigma^2}{2\kappa N-2\kappa(1+\hat{\omega}N) L\gamma}=\mathcal{O}\left(\frac{\sigma^2}{\kappa \lambda_2}\right).
	\end{align*}
	When $\lambda_2=\Theta (N)$, $\phi^*=\mathcal{O}({\sigma^2}/{(\kappa N)})$ follows immediately.
\end{proof}
\begin{remark}
	From Corollary \ref{cor:bounds}, $\phi^*$ is decreasing in the algebraic connectivity of the swarming network. In particular with a strong connectivity ($\lambda_2={\Theta}(N)$), $\phi^*$ decreases in the number of computing threads.
	This demonstrates the noise reduction property of the swarming-based scheme.
	\end{remark}

	\subsubsection{Comparison with Centralized Implementation}
	In this section, we compare the performances of the swarming-based approach (\ref{opt eq: x_i,k_basic}) and the centralized method (\ref{eq:x_k centralized}) in terms of their ultimate error bounds and convergence speeds.
	
	First we derive the convergence results for
	the centralized algorithm (\ref{eq:x_k centralized}):
	\begin{align*}
	x_{k+1}=x_k+\tg(-\nabla f(x_k)+\epsilon_k).
	\end{align*}
	Suppose Assumption \ref{asp:Lipschitz}, \ref{asp:gradient_strconvexity} hold and $\tg<2/L$.  Define $G_k :=\|x_k-x^*\|^2$ to measure the quality of solutions under this algorithm. Then,
	\begin{align*}
	G_{k+1}=&\|x_{k+1}-x^*\|^2\\
	= & \|x_k+\tg(-\nabla f(x_k)+\epsilon_k)-x^*\|^2\\
	= & G_k+\tg^2\|\nabla f(x_k)\|^2+\tg^2\|\epsilon_k\|^2
	-2\tg \nabla^T f(x_k)(x_k-x^*)+2\tg (x_k-x^*)^T \epsilon_k\\
	\le & G_k+L\tg^2 \nabla^T f(x_k)(x_k-x^*)
	-2\tg \nabla^T f(x_k)(x_k-x^*)+\tg^2\|\epsilon_k\|^2\\
	&+2\tg (x_k-x^*)^T \epsilon_k\\
	\le & G_k+\left(L\tg^2 -2\tg\right)\kappa G_k+\tg^2\|\epsilon_k\|^2+2\tg (x_k-x^*)^T \epsilon_k\\
	= & G_k+\kappa(L\tg-2)\tg G_k+\tg^2\|\epsilon_k\|^2+2\tg (x_k-x^*)^T \epsilon_k.
	\end{align*}
	Taking expectation on both sides,
		\begin{align*}
		\mathbb{E}[G_{k+1}]
		\le & \mathbb{E}[G_k]+\kappa(L\tg-2)\tg \mathbb{E}[G_k]+\frac{1}{N}\tg^2\sigma^2.
		\end{align*}
		We have
		\begin{align}
	\mathbb{E}[\|x_k-x^*\|^2]
		\le & \frac{\tg\sigma^2}{\kappa N(2-L\tg)}+\left[G_0-\frac{\tg\sigma^2}{\kappa N(2-L\tg)}\right](1-2\kappa\tg +\kappa L\tg^2)^{k-1}.
		\label{EG_k}
		\end{align}
		
		The long-run error bound of scheme (\ref{eq:x_k centralized}) is
		\begin{align*}
		\phi^{**}=\frac{\tg\sigma^2}{\kappa N(2-L\tg)}=\mathcal{O}\left(\frac{\sigma^2}{\kappa N}\right).
		\end{align*}
			By Corollary \ref{cor:bounds}, when $\lambda_2=\Theta(N)$ and $\tg$ is sufficiently small, $\phi^{**}$ and $\phi^{*}$ are in the same order.
		
		We now compare the real-time convergence speeds of the two approaches. In the swarming-based implementation, by (\ref{EU_k thm1}),
				\begin{align*}
					\mathbb{E}[\Vert \bar{x}_{Nk}-x^{\ast }\Vert ^{2}]\le \phi^*+ (U_0+\hat{\omega}\bar{V}_0-\phi^*)\left[1-\frac{2}{N}\kappa\tg+\frac{2}{N^2}\kappa(1+\hat{\omega}N) L\gamma^2\right]^{Nk}.
				\end{align*}
				Under a centralized algorithm, from (\ref{EG_k}),
						\begin{align*}
							\mathbb{E}[\|x_k-x^*\|^2]\le \phi^{**}+ (G_0-\phi^{**})(1-2 \kappa\tg+\kappa L\tg^2)^k.
						\end{align*}
					For large $N$,
					\begin{align*}
					\left[1-\frac{2}{N}\kappa\tg+\frac{2}{N^2}\kappa(1+\hat{\omega}N) L\gamma^2\right]^N=1-2\kappa\gamma+\mathcal{O}\left({\frac{1}{N^2}}\right)<1-2\kappa\tg +\kappa L\tg^2.
						\end{align*}
				We can see that $\mathbb{E}[U_{Nk}]$ converges at a similar rate to $\mathbb{E}[G_k]$. However, the time needed to complete $Nk$ iterations in the swarming-based method is approximately 
				\begin{equation*}
				Nk\cdot\Delta t/N=k\Delta t,
				\end{equation*}
				while the time needed for the centralized algorithm to complete $k$ steps is approximately $k\Delta t_c$, where (see \cite{Lugo})
				\begin{equation*}
				\Delta t_c=\mathbb{E}[\tilde{\Delta} t_c]=\mathbb{E}\left[\max_{i \in \{1,\ldots,N\}}\{\tilde{\Delta} t_i\}\right]=\left(1+\frac{1}{2}+\ldots+\frac{1}{N}\right)\Delta t>\Delta t.
				\end{equation*} 
	Clearly the swarming-based scheme converges faster than the centralized one in real time. The ratio of convergence speeds is approximately $\Delta t_c:\Delta t$, or $\ln N:1$.
		
\begin{remark}
	Another potential benchmark for assessing relative performance is the average solution of $N>1$ threads implementing independently the stochastic gradient-descent method (without an attraction term). However, this approach is deficient as the expected value of distance between the average solution and the optimal solution is bounded away from zero whenever the function is not symmetric with respect to the optimal solution.
	\end{remark}		
		
		In what follows we show the swarming-based framework could be applied to the optimization of general convex and nonconvex objective functions.
\subsection{General Convex Optimization}
\label{convex}
The following theorem characterizes the performance of the swarming-based approach for general convex objective functions. We refer to \cite{nemirovski2009robust} for utilizing the average of history iterates.
\begin{theorem}
	\label{thm: convex}
	Suppose Assumptions \ref{asp: gradient samples}, \ref{asp: network}, \ref{asp: exponential},  \ref{asp:Lipschitz} and \ref{asp:gradient_convexity} hold.  Define
	\begin{align}
		\tilde{\omega}:=\frac{NL+4a^2\overline{d}^2\tg}{NL+aN\lambda_2-4a^2N\overline{d}^2\tg}
		\label{omega_k}
	\end{align}
Let step size $\tg$ be such that $\tilde{\omega}\in(0,1)$ and 
	\begin{align}
		\mu=\frac{\tg}{N^2}-(1+\tilde{\omega} N)\frac{\tg^2 L}{N^3}>0.
		\label{mu_k}
	\end{align}
 Let $K>1$ and
 \begin{equation}
 \tilde{x}_K:=\frac{1}{K}\sum_{k=0}^{K-1}\bar{x}_k.
 \label{tilde x_k}
 \end{equation}	
	 Under Algorithm (\ref{opt eq: x_i,k_basic}),
	\begin{align}
	\mathbb{E}[f(\tilde{x}_K)-f(x^*)]
		\le \frac{1}{2NK\mu}\left[U_0+\tilde{\omega}\oV_0+
		(1+\tilde{\omega} N)\frac{K\tg^2\sigma^2}{N^2}\right].
		\label{convex_error}
	\end{align}
\end{theorem}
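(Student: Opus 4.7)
The plan is to apply Lemma \ref{lem EUwV} with the free parameter $\omega$ set to $\tilde{\omega}$ defined in (\ref{omega_k}), chosen precisely so that the coefficient of $\oV_k$ on the right-hand side of (\ref{EUwV}) vanishes. Collecting those terms gives
\[
\frac{2\tg}{N}\Bigl[L+\frac{4a^2\overline{d}^2\tg}{N}-\omega\bigl(L+a\lambda_2-4a^2\overline{d}^2\tg\bigr)\Bigr]\oV_k,
\]
which equals zero exactly when $\omega=\tilde{\omega}$. With this substitution Lemma \ref{lem EUwV} collapses to
\[
\mathbb{E}[U_{k+1}+\tilde{\omega}\oV_{k+1}\mid \mathbf{x}_k] \le U_k+\tilde{\omega}\oV_k - 2\mu\sum_{i=1}^{N}\nabla^T f(x_{i,k})(x_{i,k}-x^{\ast}) + (1+\tilde{\omega}N)\frac{\tg^2\sigma^2}{N^2},
\]
where $\mu>0$ is the quantity in (\ref{mu_k}).

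Next I would invoke Assumption \ref{asp:gradient_convexity} twice. First, for each thread $i$, convexity gives $\nabla^T f(x_{i,k})(x_{i,k}-x^{\ast})\ge f(x_{i,k})-f(x^{\ast})$, and then Jensen's inequality applied to $f$ yields $\tfrac{1}{N}\sum_{i=1}^{N}f(x_{i,k})\ge f(\ox_k)$. Combining,
\[
\sum_{i=1}^{N}\nabla^T f(x_{i,k})(x_{i,k}-x^{\ast})\ge N\bigl[f(\ox_k)-f(x^{\ast})\bigr].
\]
Substituting back and taking total expectation produces a one-step inequality of the form
\[
2\mu N\,\mathbb{E}[f(\ox_k)-f(x^{\ast})]\le \mathbb{E}[U_k+\tilde{\omega}\oV_k]-\mathbb{E}[U_{k+1}+\tilde{\omega}\oV_{k+1}] + (1+\tilde{\omega}N)\frac{\tg^2\sigma^2}{N^2}.
\]

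The remainder is a telescoping argument: summing the above from $k=0$ to $K-1$, dropping the nonnegative terminal term $\mathbb{E}[U_K+\tilde{\omega}\oV_K]$, and dividing by $2\mu N K$ gives
\[
\frac{1}{K}\sum_{k=0}^{K-1}\mathbb{E}[f(\ox_k)-f(x^{\ast})]\le \frac{1}{2NK\mu}\Bigl[U_0+\tilde{\omega}\oV_0+(1+\tilde{\omega}N)\frac{K\tg^2\sigma^2}{N^2}\Bigr].
\]
A final application of Jensen's inequality to the convex function $f$ and to the averaged iterate $\tilde{x}_K$ from (\ref{tilde x_k}) yields $\mathbb{E}[f(\tilde{x}_K)-f(x^{\ast})]\le \tfrac{1}{K}\sum_{k=0}^{K-1}\mathbb{E}[f(\ox_k)-f(x^{\ast})]$, which establishes (\ref{convex_error}).

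The one nontrivial point is the algebraic verification that $\tilde{\omega}$ in (\ref{omega_k}) is exactly the value that annihilates the $\oV_k$ coefficient in Lemma \ref{lem EUwV}; everything else is telescoping plus two uses of convexity. The step-size hypotheses $\tilde{\omega}\in(0,1)$ and $\mu>0$ are what make the construction admissible, and the restriction $\tilde{\omega}<1$ also ensures the $(1-\omega)L\tg\oV_k/N$ contribution to the cancellation has the right sign so that $\tilde{\omega}$ can balance it against the connectivity term $-\omega a\lambda_2 \tg\oV_k/N$.
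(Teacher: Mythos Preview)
Your proposal is correct and follows essentially the same route as the paper: apply Lemma \ref{lem EUwV} with $\omega=\tilde{\omega}$ so the $\oV_k$ coefficient cancels, use convexity to pass from $\nabla^T f(x_{i,k})(x_{i,k}-x^{\ast})$ to $f(x_{i,k})-f(x^{\ast})$, telescope, drop the nonnegative terminal term, and finish with Jensen's inequality twice (once for $\bar{x}_k$ and once for $\tilde{x}_K$). The only cosmetic difference is ordering---the paper keeps $\sum_i[f(x_{i,k})-f(x^{\ast})]$ through the telescoping and applies Jensen afterwards, whereas you pass to $N[f(\bar{x}_k)-f(x^{\ast})]$ first---but the arguments are the same.
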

\begin{proof}
See Appendix \ref{subsec proof_thm: convex}.
\end{proof}
\begin{remark}
		To compute $\tilde{x}_K$ efficiently, notice that
	\begin{equation*}
	\tilde{x}_K=\frac{1}{N}\left(\frac{1}{K}\sum_{k=0}^{K-1}x_{i,k}\right)=\frac{1}{N}\tilde{x}_{i,K},
	\end{equation*}
	where $\tilde{x}_{i,K}=(1/K)\sum_{k=0}^{K-1}x_{i,k}$. At each step $k>0$, each thread $i$ needs only conduct local update $\tilde{x}_{i,k}=(1-1/k)\tilde{x}_{i,k-1}+(1/k)x_{i,k-1}$ to keep track of its own running average.
\end{remark}
It is clear that the upper bound is decreasing in $N$ and $\lambda_2$.
We now consider a possible strategy for choosing step size $\tg$.
\begin{corollary}
	\label{cor: convex}
	Suppose Assumptions \ref{asp: gradient samples}, \ref{asp: network}, \ref{asp: exponential},  \ref{asp:Lipschitz} and \ref{asp:gradient_convexity} hold.  Let step size $\tg$ and $K$ satisfy the conditions in Theorem \ref{thm: convex} and
	\begin{equation}
	\tg=\frac{D\sqrt{\lambda_2 N}}{\sigma\sqrt{K}}
	\le \min\left(\frac{\lambda_2}{8a\bar{d}^2}, \frac{(2L+a\lambda_2)N}{4(NL+L+a\lambda_2)L}\right)
	\label{gamma D}
	\end{equation}%
	for some $D>0$. Let $\tilde{\omega}$, $\mu$, and $\tilde{x}_K$ be set to (\ref{omega_k})-(\ref{tilde x_k}).
	Under Algorithm (\ref{opt eq: x_i,k_basic}),
\begin{align*}
		\mathbb{E}[f(\tilde{x}_K)-f(x^*)]
		\le \phi_K^*=\frac{\sigma\sqrt{N}}{D \sqrt{\lambda_2K}}\left[U_0+\tilde{\omega}\oV_0+
		\left(1+\frac{2NL+a\lambda_2}{2L+a\lambda_2}\right)\frac{D^2\lambda_2}{N}\right]
	\end{align*}
	\end{corollary}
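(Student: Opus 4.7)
The plan is to apply Theorem \ref{thm: convex} directly and then show that under the given step-size choice the quantities $\tilde{\omega}$, $1+\tilde{\omega}N$, and $\mu$ can each be controlled by simple expressions in $N$, $L$, $a$, $\lambda_2$. The bound in Theorem \ref{thm: convex} has the form
\begin{equation*}
\mathbb{E}[f(\tilde{x}_K)-f(x^*)]\le \frac{1}{2NK\mu}\left[U_0+\tilde{\omega}\oV_0+(1+\tilde{\omega}N)\frac{K\tg^2\sigma^2}{N^2}\right],
\end{equation*}
so my strategy is to turn $\frac{1}{2NK\mu}$ into a clean multiple of $\frac{1}{K\tg}$ and factor out $\frac{\sigma\sqrt{N}}{D\sqrt{\lambda_2 K}}$ from both terms after plugging in the prescribed $\tg$.

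First I would use the constraint $\tg\le \lambda_2/(8a\bar{d}^2)$ from (\ref{gamma D}) to bound the expression in (\ref{omega_k}). This constraint gives $4a^2\bar{d}^2\tg\le a\lambda_2/2$, so the numerator of $\tilde{\omega}$ is at most $NL+a\lambda_2/2$ and the denominator is at least $NL+aN\lambda_2/2$. A short computation then yields
\begin{equation*}
1+\tilde{\omega}N\;\le\; 1+\frac{N(2NL+a\lambda_2)}{2NL+aN\lambda_2}\;=\;1+\frac{2NL+a\lambda_2}{2L+a\lambda_2},
\end{equation*}
which is exactly the factor appearing in the corollary's bound. This is the first algebraic bridge that makes the final expression collapse into the form claimed.

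Next, using the upper bound just obtained, I would simplify $1+\tilde{\omega}N\le 2(NL+L+a\lambda_2)/(2L+a\lambda_2)$ and combine it with the second bound on $\tg$ from (\ref{gamma D}), namely $\tg\le (2L+a\lambda_2)N/[4(NL+L+a\lambda_2)L]$. These two bounds multiply to give
\begin{equation*}
(1+\tilde{\omega}N)\,\frac{\tg L}{N}\;\le\;\frac{1}{2},
\end{equation*}
so from the definition (\ref{mu_k}),
\begin{equation*}
\mu=\frac{\tg}{N^2}\left[1-(1+\tilde{\omega}N)\frac{\tg L}{N}\right]\;\ge\;\frac{\tg}{2N^2},
\end{equation*}
which yields $\frac{1}{2NK\mu}\le \frac{N}{K\tg}$. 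Substituting this into the Theorem \ref{thm: convex} bound gives
\begin{equation*}
\mathbb{E}[f(\tilde{x}_K)-f(x^*)]\;\le\;\frac{N}{K\tg}(U_0+\tilde{\omega}\oV_0)+(1+\tilde{\omega}N)\frac{\tg\sigma^2}{N}.
\end{equation*}

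Finally, I would substitute $\tg=D\sqrt{\lambda_2 N}/(\sigma\sqrt{K})$ and check the two elementary identities
\begin{equation*}
\frac{N}{K\tg}=\frac{\sigma\sqrt{N}}{D\sqrt{\lambda_2 K}},\qquad \frac{\tg\sigma^2}{N}=\frac{\sigma\sqrt{N}}{D\sqrt{\lambda_2 K}}\cdot\frac{D^2\lambda_2}{N}.
\end{equation*}
Factoring the common prefactor $\sigma\sqrt{N}/(D\sqrt{\lambda_2 K})$ and inserting the bound $1+\tilde{\omega}N\le 1+(2NL+a\lambda_2)/(2L+a\lambda_2)$ immediately produces $\phi_K^*$ in the stated form. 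No step is especially delicate; the main risk is bookkeeping—tracking $\tilde{\omega}$ through the two independent step-size constraints in (\ref{gamma D}) and verifying that the specific step size $D\sqrt{\lambda_2 N}/(\sigma\sqrt{K})$ makes both the deterministic decay term and the noise term carry the same prefactor. I expect the bookkeeping around $(1+\tilde{\omega}N)\tg L/N\le 1/2$ to be the part where it is easiest to lose a factor, so I would double-check that combination carefully before assembling the final expression.
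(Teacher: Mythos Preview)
Your proposal is correct and matches the paper's proof essentially step for step: the paper uses $\tg\le\lambda_2/(8a\bar d^2)$ to obtain $\tilde{\omega}\le(2NL+a\lambda_2)/(2NL+aN\lambda_2)$, hence $1+\tilde{\omega}N\le 1+(2NL+a\lambda_2)/(2L+a\lambda_2)$, then uses the second bound in (\ref{gamma D}) to get $\mu\ge\tg/(2N^2)$, and finally substitutes $\tg=D\sqrt{\lambda_2 N}/(\sigma\sqrt{K})$ into the Theorem~\ref{thm: convex} estimate. Your outline reproduces exactly these intermediate bounds and the same final algebra.
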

\begin{proof}
See Appendix \ref{subsec proof_cor: convex}.
	\end{proof}
\begin{remark}
$\phi_K^*=\mathcal{O}(\sigma\sqrt{N}/\sqrt{\lambda_2 K})$.
If in addition $\lambda_2=\Theta(N)$,  then $\phi_K^*=\mathcal{O}(\sigma/\sqrt{K})$.
	Similar to the strongly-convex case, the error bound obtained here when $K=nk$ is comparable with that achieved by a centralized method with $k$ iterations (see \cite{nemirovski2009robust,ghadimi2013stochastic} for reference). Nevertheless, the time needed in a swarming-based approach is considerably less than that in a centralized one (the time ratio is the same as in the strongly-convex case).
	\end{remark}

\subsection{Nonconvex Optimization}
We further show that the swarming-based approach can be used for optimizing nonconvex objective functions. The employed randomization technique was introduced in \cite{ghadimi2013stochastic}.
\begin{theorem}
	\label{thm: nonconvex}
		Suppose Assumptions \ref{asp: gradient samples},  \ref{asp: network}, \ref{asp: exponential}, \ref{asp:Lipschitz} hold, and
		$a>5L/{(4\lambda_2)}$.
		Define
		\begin{equation}
		\check{\omega}:=\frac{NL+2(2L^2+4a^2\overline{d}^2)\tg}{4N(a\lambda_2-L)-4N(2L^2+4a^2\overline{d}^2)\tg}.
		\label{check_omega}
		\end{equation}
Let step size $\tg$ be such that $\check{\omega}\in(0,1)$ and 
	\begin{equation}
\check{\mu}=\frac{\tg}{2N^2}-\left(2+4\check{\omega}N\right)\frac{L\tg^2}{N^3}>0.
\label{check_mu}
\end{equation}
Let $K>1$. Define a new random variable $\ox_R$ as the following (see \cite{ghadimi2013stochastic}):
\begin{equation*}
\mathbb{P}[R=k]=\frac{1}{K},\forall k=0,1,\ldots,K-1.
\end{equation*} 
Then we have
	\begin{equation}
\frac{1}{L}\mathbb{E}[\|\nabla f(\ox_R)\|^2]
\le \frac{1}{NK\check{\mu}}\left[\frac{f(\ox_0)-f^*+\check{\omega}L\oV_0}{L}+\left(\frac{1}{2}+\check{\omega}N\right)\frac{K\tg^2\sigma^2}{N^2}\right],
\label{nonconvex_error}
\end{equation}
where $f^*$ denotes the optimal value of (\ref{opt Problem_def}).
\end{theorem}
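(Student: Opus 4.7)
The plan is to combine an $L$-smoothness descent inequality for $f(\ox_k)$ with the cohesiveness inequality (\ref{neq:doV}) from Lemma~\ref{lem2} into a Lyapunov function $\Phi_k := f(\ox_k) - f^\ast + \check{\omega} L\,\oV_k$, telescope the resulting recursion, and apply the randomization on $R$. First I would compute $\ox_{k+1}-\ox_k = (\tg/N)\sum_i u_{i,k}\mathds{1}_{i,k}$ and exploit two simplifying facts: by symmetry of the adjacency matrix the attraction contribution drops out of $\mathbb{E}[\ox_{k+1}-\ox_k\mid \mathbf{x}_k]$, leaving $-(\tg/N^2)\sum_i\nabla f(x_{i,k})$; and since exactly one indicator is nonzero per global tick, $\|\sum_i u_{i,k}\mathds{1}_{i,k}\|^2=\sum_i \|u_{i,k}\|^2\mathds{1}_{i,k}$, so the conditional second moment reduces to $(\tg^2/N^3)\sum_i \mathbb{E}[\|u_{i,k}\|^2\mid \mathbf{x}_k]$.

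Next I would apply the descent lemma $f(\ox_{k+1})\le f(\ox_k)+\nabla^T f(\ox_k)(\ox_{k+1}-\ox_k)+(L/2)\|\ox_{k+1}-\ox_k\|^2$ and take conditional expectations. For the cross term, writing $\nabla f(x_{i,k})=\nabla f(\ox_k)+(\nabla f(x_{i,k})-\nabla f(\ox_k))$ and using Assumption~\ref{asp:Lipschitz} together with Young's inequality yields a bound of the form $-(\tg/(2N))\|\nabla f(\ox_k)\|^2+(\tg L^2/(2N))\oV_k$. For the quadratic step term I would absorb the noise via $\mathbb{E}[\varepsilon_{i,k}]=0$ and $\mathbb{E}[\|\varepsilon_{i,k}\|^2]\le\sigma^2$, apply Lemma~\ref{lem nabla+a}, and bound $\sum_i \|\nabla f(x_{i,k})\|^2\le 2N\|\nabla f(\ox_k)\|^2+2L^2 N\oV_k$ via Lipschitz, producing a clean $\|\nabla f(\ox_k)\|^2$, $\oV_k$, and $\sigma^2$ decomposition of $\mathbb{E}[f(\ox_{k+1})\mid\mathbf{x}_k]-f(\ox_k)$.

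Then I would add $\check{\omega} L$ times the recursion (\ref{neq:doV}), first bounding $|\sum_i\nabla^T f(x_{i,k}) e_{i,k}|\le LN\oV_k$ using $\sum_i e_{i,k}=0$ and Lipschitz, and invoking Lemma~\ref{lem nabla+a} again on the quadratic term. Definition (\ref{check_omega}) is exactly the linear-in-$\check{\omega}$ equation that makes the total coefficient of $\oV_k$ vanish, and definition (\ref{check_mu}) then guarantees the coefficient of $\|\nabla f(\ox_k)\|^2$ equals $-N\check{\mu}<0$; the residual noise term collapses to $(1/2+\check{\omega}N)L\tg^2\sigma^2/N^2$. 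Telescoping this recursion from $k=0$ to $K-1$, using $\mathbb{E}[\Phi_K]\ge 0$, dividing by $NK\check{\mu}L$, and observing that $\mathbb{E}[\|\nabla f(\ox_R)\|^2]=(1/K)\sum_{k=0}^{K-1}\mathbb{E}[\|\nabla f(\ox_k)\|^2]$ by the uniform choice of $R$, yields (\ref{nonconvex_error}).

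The main obstacle is the algebraic tuning of $\check{\omega}$. The $\oV_k$ coefficient in the combined recursion is affine in $\check{\omega}$, and (\ref{check_omega}) is precisely the root that sets it to zero; for this root to lie in $(0,1)$ the denominator $4N(a\lambda_2-L)-4N(2L^2+4a^2\overline{d}^2)\tg$ must exceed the numerator $NL+2(2L^2+4a^2\overline{d}^2)\tg$, and at $\tg=0$ this reduces to $4(a\lambda_2-L)>L$, i.e., $a>5L/(4\lambda_2)$, the standing hypothesis. The step-size restriction then provides enough slack to keep $\check{\omega}<1$ and $\check{\mu}>0$ simultaneously. Once this consistency is established, the remainder is a routine telescoping argument.
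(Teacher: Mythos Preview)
Your proposal is correct and follows essentially the same route as the paper's own proof: the paper also combines the $L$-smoothness descent inequality for $f(\ox_k)$ with Lemma~\ref{lem2}, uses the same Young-type bound on the cross term and the same Lipschitz decomposition $\sum_i\|\nabla f(x_{i,k})\|^2\le 2N\|\nabla f(\ox_k)\|^2+2L^2N\oV_k$ (applied both to the descent step and to the quadratic term in (\ref{neq:doV})), chooses $\check{\omega}$ precisely to annihilate the $\oV_k$ coefficient, and then telescopes. Your reading of the role of $a>5L/(4\lambda_2)$ as the $\tg\to 0$ condition for $\check{\omega}\in(0,1)$ matches the paper's remark as well.
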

\begin{proof}
See Appendix \ref{subsec proof_thm: nonconvex}.
\end{proof}		
\begin{remark}
	Note that the conditions and the obtained error bound in the nonconvex case are similar to those for optimizing general convex objective functions. Hence the discussions in Section \ref{convex} apply here.
	\end{remark}

\section{Numerical Example}
\label{sec: numeric}
In this section, we provide a numerical example to illustrate our theoretic findings. 
Consider the \emph{on-line} Ridge regression problem, i.e.,
\begin{equation}
\label{Ridge Regression}
\min_{x\in\mathbb{R}^d}f(x)\ \ \left(=\mathbb{E}_{u,v}\left[\left(u^T x-v\right)^2+\rho\|x\|^2\right]\right)
\end{equation}
where $\rho>0$ ia a penalty parameter.
Samples in the form of $(u,v)$ are gathered continuously with $u\in\mathbb{R}^d$ representing the features and $v\in\mathbb{R}$ being the observed outputs. We assume that each $u\in[-1,1]^d$ is uniformly distributed, and $v$ is drawn according to $v=u^T \tilde{x}+\varepsilon$. Here $\tilde{x}\in[0,1]^d$ is a predefined parameter, and $\varepsilon$ are independent Gaussian noises with mean $0$ and variance $1$. We further assume that the time needed to gather each sample is independent and exponentially distributed with mean $0.02s$.

Given a pair $(u,v)$, we can calculate an estimated gradient of $f(x)$:
\begin{equation}
g(x,u,v)=2(u^Tx -v)u+2\rho x.
\end{equation}
This is an unbiased estimator of $\nabla f(x)$ since 
\begin{equation*}
\nabla f(x)=\mathbb{E}_{u,v}\left[2uu^T x-2uv\right]+2\rho x=\mathbb{E}_{u,v}g(x,u,v).
\end{equation*}
Furthermore,
\begin{multline*}
\mathbb{E}_{u,v}\left[\|g(x,u,v)-\nabla f(x)\|^2\right]=\mathbb{E}_{u,v}\left[\left\|2uu^Tx-2uv-\mathbb{E}_{u,v}\left[2uu^T x-2uv\right]\right\|^2\right]\\
\le \mathbb{E}_{u,v}\left[\left\|2uu^Tx-\mathbb{E}_{u,v}[2uu^T] x-2u\left(u^T \tilde{x}+\varepsilon\right)+\mathbb{E}_{u,v}[2u\left(u^T \tilde{x}+\varepsilon\right)]\right\|^2\right]\\
=\mathbb{E}_{u,v}\left[\left\|\left(2uu^T-\mathbb{E}_{u,v}[2uu^T]\right) (x-\tilde{x})-2u\varepsilon\right\|^2\right]\le C_1\|x-\tilde{x}\|^2+C_2
\end{multline*}
for some $C_1,C_2>0$. As long as $\|x-\tilde{x}\|^2$ is bounded (which can be verified in the experiments), $\mathbb{E}_{u,v}\left[\|g(x,u,v)-\nabla f(x)\|^2\right]$ is uniformly bounded which satisfies Assumption \ref{asp: gradient samples}.
Notice that the Hessian matrix of $f(x)$ is $\mathbf{H}_f=\mathbb{E}_{u,v}\left[2uu^T\right]+2\rho I_d=(2/3+2\rho)I_d\succ 0$. Therefore $f(\cdot)$ is strongly convex, and problem (\ref{Ridge Regression}) has a unique solution $x^*$, which solves
\begin{equation*}
\nabla f(x^*)=\mathbb{E}_{u,v}\left[2uu^T\right] x^*-\mathbb{E}_{u,v}\left[2u\left(u^T \tilde{x}+\varepsilon\right)\right]+2\rho x^*=0.
\end{equation*}
We get
\begin{equation*}
x^*=\left(\mathbb{E}_{u,v}\left[uu^T\right]+\rho I_d\right)^{-1}\mathbb{E}_{u,v}\left[uu^T\right]\tilde{x}=(1+3\rho)^{-1}\tilde{x}.
\end{equation*}

In the experiments, we consider $9$ instances with different combinations of $d\in\{20,50,100\}$ and $N\in\{20,50,100\}$. For each instance, we run $100$ simulations with $x_{i,0}=x_0=\mathbf{0}$, and $\tilde{x}\in[0,1]^d$ is drawn uniformly randomly. Penalty parameter $\rho=0.1$ and step size $\gamma=0.01$. Under the swarming-based approach, we assume that $N$ threads constitute a random network, in which each two threads are linked with probability $p=10/N$. Attraction parameter is set to $a=1$. 

Figure \ref{fig: comparison} visualizes three sample simulations with $d=20$ and $N$ chosen from $\{20,50,100\}$ respectively. In all three cases, $\|\bar{x}_k-x^*\|^2$ (under the swarming-based approach) and $\|x_k-x^*\|^2$ (under the centralized scheme) both decrease linearly until reaching similar neighborhoods of $0$. It can be seen that the swarming-based method enjoys a faster convergence speed. As $N$ increases, the ultimate error bounds on $\mathbb{E}[\|\bar{x}_k-x^*\|^2]$ and $\mathbb{E}[\|x_k-x^*\|^2]$ both go down as expected. Furthermore, the centralized method slows down under larger $N$ because of increased synchronization overhead.
\begin{figure}
	\centering
		\subfigure[Instance $(d,N)=(20,20)$.]{\includegraphics[width=2.5in]{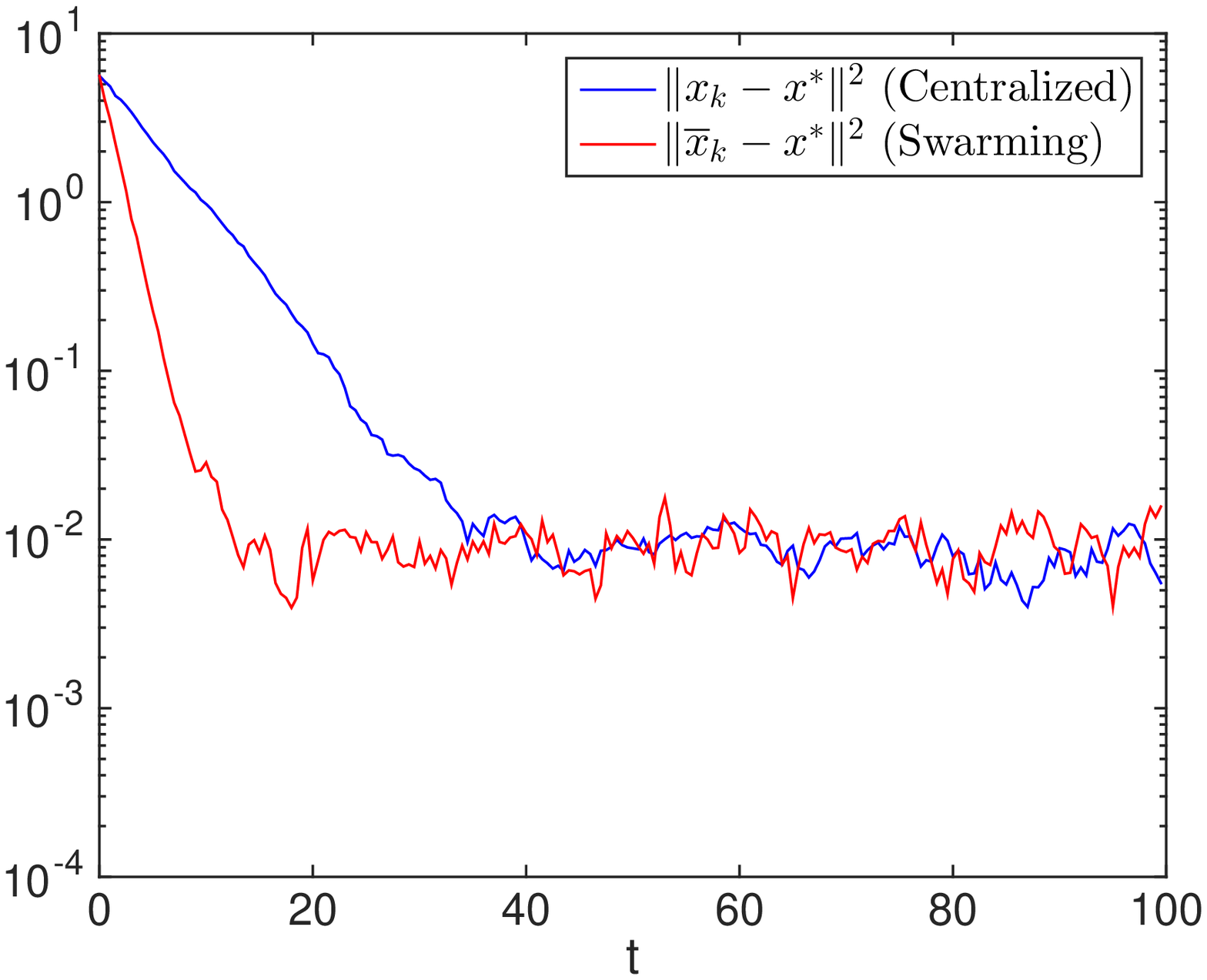}} 
	\subfigure[Instance $(d,N)=(20,50)$.]{\includegraphics[width=2.5in]{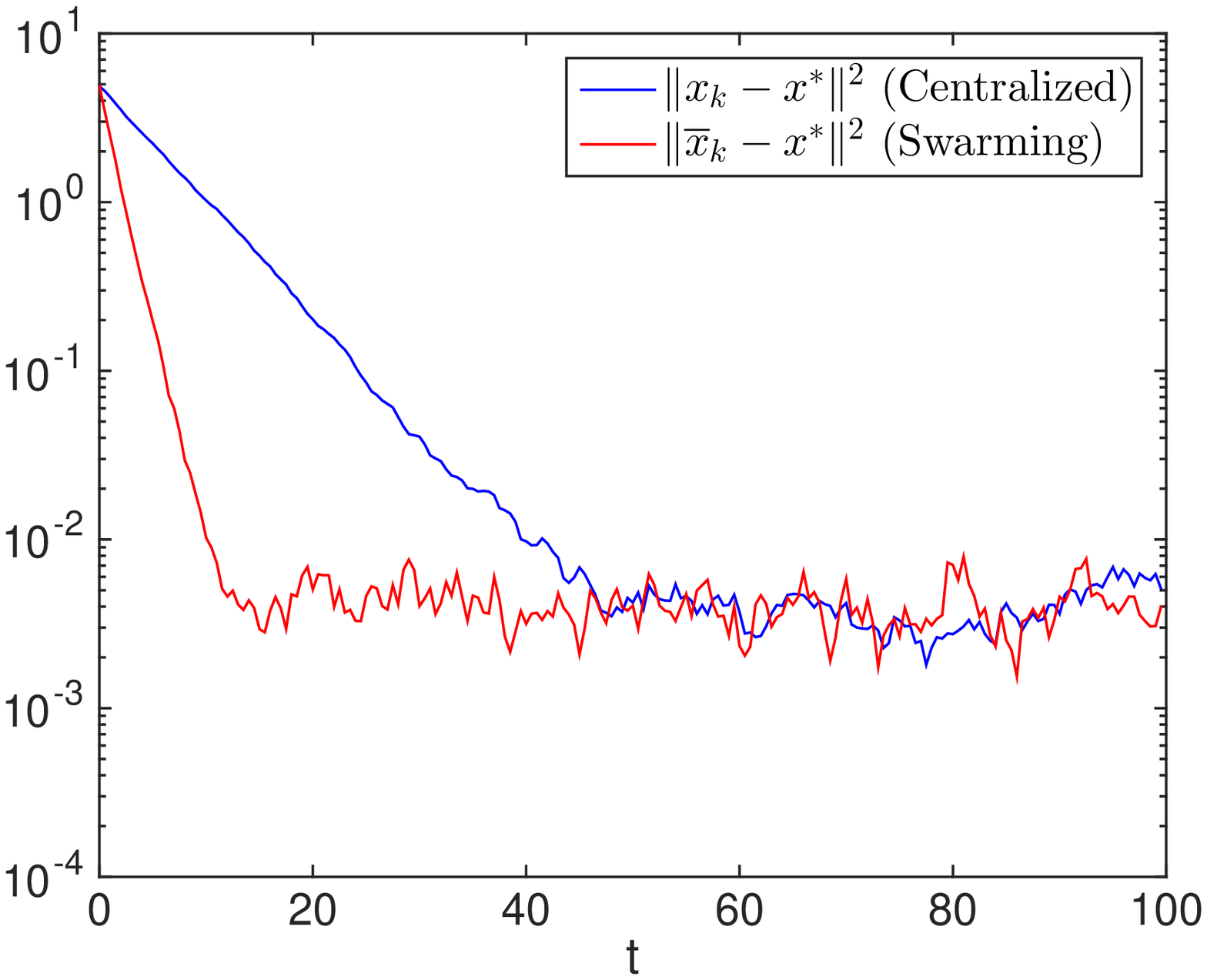}}
	\subfigure[Instance $(d,N)=(20,100)$.]{\includegraphics[width=2.5in]{{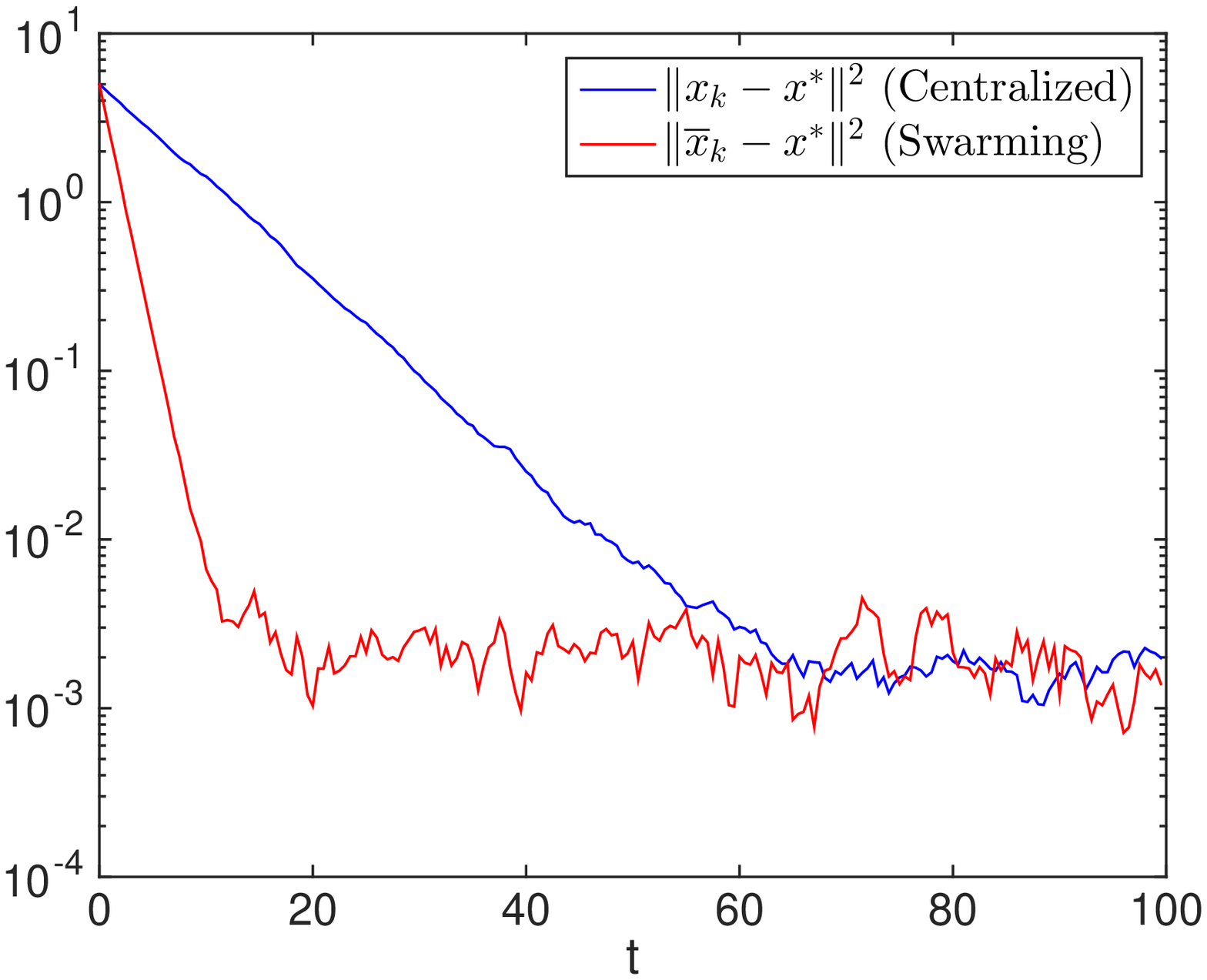}}} 
	\caption{Performance comparison between the swarming-based approach and the centralized scheme for on-line Ridge regression.}
	\label{fig: comparison}
\end{figure}

Table \ref{table: comparison} further compares the times it take for the two methods to meet certain stopping criteria. $T_s$ (respectively, $T_c$) denotes the average time over $100$ runs to reach $\|\bar{x}_k-x^*\|^2\le 0.1$ (respectively, $\|x_k-x^*\|^2\le 0.1$). We can see that $T_c>T_s$, and $T_c/T_s$ matches our theoretical estimate $\Delta t_c/\Delta t$ in all instances. 

In sum, the on-line Ridge regression example verified the advantages of the swarming-based approach when compared to a centralized scheme under synchronization overhead.
\begin{table}[!h]	
	\centering
	\caption{Comparison between the swarming-based approach and the centralized scheme.}
	\begin{tabular}{ccccc}
		Instance $(d,N)$ & $T_s /s$ & $T_c /s$ &  $T_c/T_s$  & $\Delta t_c/\Delta t$\\
		\midrule
		$(20,20)$ & 6.26 & 22.26 &  3.56 & 3.60\\
		$(20,50)$ & 6.77 & 30.14 & 4.45 & 4.50\\
		$(20,100)$ & 6.44 & 32.96 & 5.12 & 5.19\\
		$(50,20)$ & 7.89 & 28.11 & 3.56 & 3.60\\
		$(50,50)$ & 7.42 & 33.18 & 4.47 & 4.50\\
		$(50,100)$ & 7.35 & 37.70 & 5.13 & 5.19\\
		$(100,20)$ & 9.71 & 34.88 & 3.59 & 3.60\\
		$(100,50)$ & 8.79 & 39.10 & 4.45 & 4.50\\
		$(100,100)$ & 8.64 & 44.27 & 5.12 & 5.19\\
		\bottomrule
	\end{tabular}
\label{table: comparison}
\end{table}

\section{Conclusions}
\label{sec: con}
In recent years, the paradigm of cloud computing has emerged as an architecture for computing that makes use of distributed (networked) computing resources. However, distributing computational effort may not be an effective approach for stochastic optimization due to the effects of noise (which may be countered by averaging at the expense of overhead). Thus, in designing a distributed approach for stochastic optimization one faces a tradeoff between faster computing (since there is no need to synchronize threads) and the effects of noise.
 
 In this paper we have analyze a scheme for stochastic optimization that can be implemented in a cloud computing environment. In the scheme, $N>1$ independent computing threads implement each a stochastic gradient-descent algorithm in which a noisy gradient sample is perturbed by an attractive term, a function of the relative distance between solutions identified by neighboring threads where the notion of neighborhood is related to a pre-defined network topology. This coupling is similar to those found in mathematical models of swarming.
 
We consider the case in which the overhead caused by synchronization is not negligible. Such is the case for example when time-consuming simulations are needed to produce gradient estimates. We show the swarming-based approach exhibits better performance than a centralized algorithm (based upon the average of $N$ observations) in terms of real time convergence speed. We derive an error bound that is monotone decreasing in network size and connectivity. Finally, we characterize the scheme's finite-time performances for both convex and nonconvex objective functions.

\appendix
\section{Main Notations}

\begin{table}[!h]	
	\centering
	\caption{Main Notations}
	\begin{tabular}{cl}
		Symbol & Meaning \\
		\midrule
		$N$          & Number of computing threads \\
		$x^*$  & Optimal solution to problem (\ref{opt Problem_def})\\
		$x_k$ & Solution after the $k$-th step (centralized algorithm)\\
		$x_{i,k}$ & Solution of thread $i$ after the $k$-th step (swarming-based algorithm)\\
		$\bar{x}_{k}$  & $({1}/{N})\sum\nolimits_{i=1}^{N}x_{i,k}$\\
		$e_{i,k}$ & $x_{i,k}-\bar{x}_k$\\
		$V_{i,k}$ & $\|e_{i,k}\|^2$\\
				$\overline{V}_k$ & $({1}/{N})\sum_{i=1}^{N}V_{i,k}$\\
		$\gamma$ & Step size\\
		$\mathds{1}_{i,k}$ & Indicator random variable for the event that thread $i$ performs the \\
		& ($k+1$)-th  update (swarming-based scheme)\\
		$\varepsilon_k$& Simulation noise from one sample (centralized algorithm)\\
		$\varepsilon_{i,k}$ & Simulation noise from one sample (swarming-based algorithm)\\
		$\sigma$ & Variance of $\varepsilon_k$ and $\varepsilon_{i,k}$\\
		$a$ & Parameter for linear attraction in the swarming-based algorithm\\
		$\mathcal{G}$ & Interaction graph of ``swarming'' threads\\
		$A=[\alpha_{ij}]$ & Adjacency matrix of $\mathcal{G}$ \\
		$\mathcal{L}=[l_{ij}]$ & Laplacian matrix of $\mathcal{G}$\\
		$\lambda_2$ & Second-smallest eigenvalue of $\mathcal{L}$ (algebraic connectivity of $\mathcal{G}$)\\ 
		$\kappa$ & Strong convexity parameter\\
		$L$ & Lipschitz constant\\
		$\overline{d}$ & $\max_i \deg(i)$, where $\deg(i)$ denotes the degree of vertex $i$ in graph $\mathcal{G}$\\
		$\Delta t$ & Expected time for generating one noisy gradient sample.\\
		$\Delta t_c$ & Expected time to gather $N$ gradient samples (centralized algorithm).\\
		\bottomrule
	\end{tabular}
\end{table}

\section{Proofs of Preliminary Results}
\subsection{Proof of Lemma \ref{lem1}}
\label{subsec proof_lem1}
By relation (\ref{opt eq: x_i,k}), 
\begin{align}  \label{eq:macro}
	\ox_{k+1} 
	= & \frac{1}{N}\sum_{i=1}^{N}(x_{i,k}+\tg u_{i,k}\mathds{1}_{i,k}) \\
	= & \ox_k-\frac{\tg}{N}\sum_{i=1}^{N}\nabla f(x_{i,k})\mathds{1}_{i,k}-\frac{a\tg}{N}\sum_{i=1}^{N}\sum_{j=1,j\neq
		i}^{N}\alpha_{ij}(x_{i,k}-x_{j,k})\mathds{1}_{i,k}+\frac{\tg}{N} \sum_{i=1}^{N} \varepsilon_{i,k}\mathds{1}_{i,k}.\notag
\end{align}%
Hence
\begin{align*}
	\label{eq:micro}
	e_{i,k+1} = & x_{i,k+1}-\bar{x}_{k+1}   \\
	=& x_{i,k}-\ox_k+\tg \left[-\nabla f(x_{i,k})+\varepsilon_{i,k}-a\sum_{j=1,j\neq i}^N \alpha_{ij}(x_{i,k}-x_{j,k})\right]\mathds{1}_{i,k}\notag\\
	& +\frac{\tg}{N}\sum_{j=1}^{N}\nabla f(x_{j,k})\mathds{1}_{j,k} +\frac{a\tg}{N}\sum_{j=1}^{N}\sum_{l=1,l\neq
		j}^{N}\alpha_{jl}(x_{j,k}-x_{l,k})\mathds{1}_{j,k}	-\frac{\tg}{N} \sum_{i=1}^{N} \varepsilon_{j,k}\mathds{1}_{j,k} \notag\\
	= &e_{i,k}-\tg \nabla f(x_{i,k})\mathds{1}_{i,k}+\frac{\tg}{N}\sum_{j=1}^{N}\nabla f(x_{j,k})\mathds{1}_{j,k}-a\tg\sum_{j=1,j\neq i}^N \alpha_{ij}(x_{i,k}-x_{j,k})\mathds{1}_{i,k} \notag\\
	& +\frac{a\tg}{N}\sum_{j=1}^{N}\sum_{l=1,l\neq
		j}^{N}\alpha_{jl}(x_{j,k}-x_{l,k})\mathds{1}_{j,k}+\tg\varepsilon_{i,k}\mathds{1}_{i,k} -\frac{\tg}{N} \sum_{j=1}^{N} \varepsilon_{j,k}\mathds{1}_{j,k}\notag\\
	=& e_{i,k}+\tg\delta_{i,k},\notag
\end{align*}%
in which the last equality follows from the definition of $\delta_{i,k}$. We then have
\begin{multline*}
	{V_{i,k+1}}=  e_{i,k+1}^T e_{i,k+1}	= V_{i,k}+2\tg e_{i,k}^T\delta_{i,k}+\tg^2\|\delta_{i,k}\|^2\\
	=  V_{i,k}+2\tg e_{i,k}^T\delta_{i,k}^f+2\tg e_{i,k}^T\delta_{i,k}^g+2\tg e_{i,k}^T\delta_{i,k}^n+\tg^2\|\delta_{i,k}\|^2.
\end{multline*}%
Notice that
\begin{multline*}
	\sum_{i=1}^{N}e_{i,k}^T\delta_{i,k}^f = \sum_{i=1}^{N}e_{i,k}^T\left[-\nabla f(x_{i,k})\mathds{1}_{i,k}+\frac{1}{N}\sum_{j=1}^{N}\nabla f(x_{j,k})\mathds{1}_{j,k}\right]\\
	= -\sum_{i=1}^{N}\nabla^T f(x_{i,k})e_{i,k}\mathds{1}_{i,k}+\frac{1}{N}\sum_{i=1}^{N}e_{i,k}^T\sum_{j=1}^{N}\nabla f(x_{j,k})\mathds{1}_{j,k}	= -\sum_{i=1}^{N}\nabla^T f(x_{i,k})e_{i,k}\mathds{1}_{i,k},
\end{multline*}
	\begin{multline*}
	\sum_{i=1}^{N}e_{i,k}^T\delta_{i,k}^g=  -a\sum_{i=1}^{N}e_{i,k}^T \sum_{j=1,j\neq i}^N \alpha_{ij}(x_{i,k}-x_{j,k})\mathds{1}_{i,k}\\
	+\frac{a}{N}\sum_{i=1}^{N}e_{i,k}^T\sum_{j=1}^{N}\sum_{l=1,l\neq
		j}^{N}\alpha_{jl}(x_{j,k}-x_{l,k})\mathds{1}_{j,k}
	= -a\sum_{i=1}^{N}\sum_{j=1,j\neq i}^N \alpha_{ij}e_{i,k}^T(e_{i,k}-e_{j,k})\mathds{1}_{i,k},
\end{multline*}
and
\begin{align*}
	\sum_{i=1}^{N}e_{i,k}^T\delta_{i,k}^n= & \sum_{i=1}^{N}e_{i,k}^T\left(\varepsilon_{i,k}\mathds{1}_{i,k}-\frac{1}{N} \sum_{j=1}^{N} \varepsilon_{j,k}\mathds{1}_{j,k}\right)=\sum_{i=1}^{N}e_{i,k}^T\varepsilon_{i,k}\mathds{1}_{i,k}.
\end{align*}
We conclude that
\begin{align*}
	\oV_{k+1} = & \frac{1}{N}\sum_{i=1}^{N}V_{i,k+1} \\
	= &  \oV_k-\frac{2\tg}{N}\sum_{i=1}^{N}\nabla^T f(x_{i,k})e_{i,k}\mathds{1}_{i,k}-\frac{2a\tg}{N}\sum_{i=1}^{N}\sum_{j=1,j\neq i}^N \alpha_{ij}e_{i,k}^T(e_{i,k}-e_{j,k})\mathds{1}_{i,k}\\
	& +\frac{2\tg}{N}\sum_{i=1}^{N}e_{i,k}^T\varepsilon_{i,k}\mathds{1}_{i,k} +\frac{\tg^2}{N}\sum_{i=1}^{N}\|\delta_{i,k}\|^2.
\end{align*}

\subsection{Proof of Lemma \ref{lem2}}
\label{subsec proof_lem2}
Taking conditional expectation on both sides of (\ref{eq: V_{k+1}}) given $\mathbf{x}_k$,
\begin{multline*} 
	\mathbb{E}[\oV_{k+1}|\mathbf{x}_k] = \oV_k-\frac{2\tg }{N^2}\sum_{i=1}^{N}\nabla^T f(x_{i,k})e_{i,k}-\frac{2a \tg}{N^2}\sum_{i=1}^{N}\sum_{j=1,j\neq i}^N \alpha_{ij}e_{i,k}^T(e_{i,k}-e_{j,k})\\+\frac{\tg^2}{N}\sum_{i=1}^{N}\mathbb{E}[\|\delta_{i,k}\|^2 | \mathbf{x}_k].
\end{multline*}
In light of (\ref{eq: sum1}),
\begin{align*} 
	\mathbb{E}[\oV_{k+1}|\mathbf{x}_k] \le \oV_k-\frac{2\tg}{N^2}\sum_{i=1}^{N}\nabla^T f(x_{i,k})e_{i,k}-\frac{2}{N}a\lambda_2\tg\oV_k+\frac{\tg^2 }{N}\sum_{i=1}^{N}\mathbb{E}[\|\delta_{i,k}\|^2 | \mathbf{x}_k].
\end{align*}		
By (\ref{delta}) and the zero mean property of $\varepsilon_{i,k}$,
\begin{align*}
	\mathbb{E}[\|\delta_{i,k}\|^2 | \mathbf{x}_k]=\mathbb{E}[\|\delta_{i,k}^f+\delta_{i,k}^g+\delta_{i,k}^n\|^2 | \mathbf{x}_k]=\mathbb{E}[\|\delta_{i,k}^f+\delta_{i,k}^g\|^2 | \mathbf{x}_k]+\mathbb{E}[\|\delta_{i,k}^n\|^2 | \mathbf{x}_k].
\end{align*}
We now study the two terms $\mathbb{E}[\|\delta_{i,k}^f+\delta_{i,k}^g\|^2 | \mathbf{x}_k]$ and $\mathbb{E}[\|\delta_{i,k}^n\|^2 | \mathbf{x}_k]$, respectively.
First, notice that $\mathds{1}_{i,k}\mathds{1}_{j,k}= 0,\, \forall i\neq j$.
\begin{align*}
	&\mathbb{E}[\|\delta_{i,k}^f+\delta_{i,k}^g\|^2 | \mathbf{x}_k]\\
	=&\mathbb{E}\left[\|-\nabla f(x_{i,k})\mathds{1}_{i,k}+\frac{1}{N}\sum_{j=1}^{N}\nabla f(x_{j,k})\mathds{1}_{j,k}-a\sum_{j=1,j\neq i}^N \alpha_{ij}(x_{i,k}-x_{j,k})\mathds{1}_{i,k}\right.\\
	&\left.+\frac{a}{N}\sum_{j=1}^{N}\sum_{l=1,l\neq
		j}^{N}\alpha_{jl}(x_{j,k}-x_{l,k})\mathds{1}_{j,k}\|^2 | \mathbf{x}_k\right]\\
	=& \mathbb{E}\left[\|-\left(1-\frac{1}{N}\right)\nabla f(x_{i,k})\mathds{1}_{i,k}+\frac{1}{N}\sum_{j=1,j\neq i}^{N}\nabla f(x_{j,k})\mathds{1}_{j,k}\right.\\
	& -a\left(1-\frac{1}{N}\right)\sum_{j=1,j\neq i}^N \alpha_{ij}(x_{i,k}-x_{j,k})\mathds{1}_{i,k}\\
	&\left.+\frac{a}{N}\sum_{j=1,j\neq i}^{N}\sum_{l=1,l\neq
		j}^{N}\alpha_{jl}(x_{j,k}-x_{l,k})\mathds{1}_{j,k}\|^2 | \mathbf{x}_k\right]\\
	= & \mathbb{E}\left[\|-\left(1-\frac{1}{N}\right)\nabla f(x_{i,k})\mathds{1}_{i,k}-a\left(1-\frac{1}{N}\right)\sum_{j=1,j\neq i}^N \alpha_{ij}(x_{i,k}-x_{j,k})\mathds{1}_{i,k}\|^2 | \mathbf{x}_k\right]\\
	& +\sum_{j=1,j\neq i}^{N}\mathbb{E}\left[\|\frac{1}{N}\nabla f(x_{j,k})\mathds{1}_{j,k}+\frac{a}{N}\sum_{l=1,l\neq
		j}^{N}\alpha_{jl}(x_{j,k}-x_{l,k})\mathds{1}_{j,k}\|^2 | \mathbf{x}_k\right]\\
	= & \left(1-\frac{1}{N}\right)^2\frac{1}{N}\|\nabla f(x_{i,k})+a\sum_{j=1,j\neq i}^N \alpha_{ij}(x_{i,k}-x_{j,k})\|^2\\
	& +\frac{1}{N^3}\sum_{j=1,j\neq i}^{N}\|\nabla f(x_{j,k})+a\sum_{l=1,l\neq
		j}^{N}\alpha_{jl}(x_{j,k}-x_{l,k})\|^2.
\end{align*}
Second,
\begin{align*}
	\mathbb{E}[\|\delta_{i,k}^n\|^2 | \mathbf{x}_k]=&\mathbb{E}\left[\|\varepsilon_{i,k}\mathds{1}_{i,k}-\frac{1}{N} \sum_{j=1}^{N} \varepsilon_{j,k}\mathds{1}_{j,k}\|^2| \mathbf{x}_k\right]\\
	=&\mathbb{E}\left[\|\left(1-\frac{1}{N}\right)\varepsilon_{i,k}\mathds{1}_{i,k}-\frac{1}{N} \sum_{j=1,j\neq i}^{N} \varepsilon_{j,k}\mathds{1}_{j,k}\|^2| \mathbf{x}_k\right]\\
	=&\left(1-\frac{1}{N}\right)^2 \frac{1}{N}\mathbb{E}[\|\varepsilon_{i,k}\|^2 | \mathbf{x}_k]+\frac{1}{N^3}\sum_{j=1,j\neq i}^{N} \mathbb{E}[\|\varepsilon_{j,k}\|^2 | \mathbf{x}_k].
\end{align*}
It follows that
\begin{align*}
	\sum_{i=1}^{N}\mathbb{E}[\|\delta_{i,k}^f+\delta_{i,k}^g\|^2 | \mathbf{x}_k]= & \left(1-\frac{1}{N}\right)^2\frac{1}{N}\sum_{i=1}^{N}\|\nabla f(x_{i,k})+a\sum_{j=1,j\neq i}^N \alpha_{ij}(x_{i,k}-x_{j,k})\|^2\\
	& +\frac{1}{N^3}\sum_{i=1}^{N}\sum_{j=1,j\neq i}^{N}\|\nabla f(x_{j,k})+a\sum_{l=1,l\neq
		j}^{N}\alpha_{jl}(x_{j,k}-x_{l,k})\|^2\\
	=& \left(1-\frac{1}{N}\right)^2\frac{1}{N}\sum_{i=1}^{N}\|\nabla f(x_{i,k})+a\sum_{j=1,j\neq i}^N \alpha_{ij}(x_{i,k}-x_{j,k})\|^2\\
	& +\frac{(N-1)}{N^3}\sum_{i=1}^{N}\|\nabla f(x_{i,k})+a\sum_{j=1,j\neq
		i}^{N}\alpha_{ij}(x_{i,k}-x_{j,k})\|^2\\
	\le &\frac{1}{N}\sum_{i=1}^{N}\|\nabla f(x_{i,k})+a\sum_{j=1,j\neq
		i}^{N}\alpha_{ij}(x_{i,k}-x_{j,k})\|^2,
\end{align*}
and
\begin{align*}
	\sum_{i=1}^{N}\mathbb{E}[\|\delta_{i,k}^n\|^2 | \mathbf{x}_k]= & \left(1-\frac{1}{N}\right)^2 \frac{1}{N}\sum_{i=1}^{N}\mathbb{E}[\|\varepsilon_{i,k}\|^2 | \mathbf{x}_k]+\frac{1}{N^3}\sum_{i=1}^{N}\sum_{j=1,j\neq i}^{N} \mathbb{E}[\|\varepsilon_{j,k}\|^2 | \mathbf{x}_k]\\
	= &\frac{(N-1)}{N^2}\sum_{i=1}^{N}\mathbb{E}[\|\varepsilon_{i,k}\|^2 | \mathbf{x}_k]\le\sigma^2.
\end{align*}
In conclusion,
\begin{align*} 
	\mathbb{E}[\oV_{k+1}|\mathbf{x}_k] \le & \oV_k-\frac{2\tg}{N^2}\sum_{i=1}^{N}\nabla^T f(x_{i,k})e_{i,k}-\frac{2}{N}a\lambda_2\tg \oV_k+\frac{\tg^2 }{N}\sum_{i=1}^{N}\mathbb{E}[\|\delta_{i,k}\|^2 | \mathbf{x}_k]\\
	=& \oV_k-\frac{2\tg}{N^2}\sum_{i=1}^{N}\nabla^T f(x_{i,k})e_{i,k}-\frac{2}{N}a\lambda_2 \tg\oV_k\\
	& +\frac{\tg^2}{N^2}\sum_{i=1}^{N}\|\nabla f(x_{i,k})+a\sum_{j=1,j\neq
		i}^{N}\alpha_{ij}(x_{i,k}-x_{j,k})\|^2+\frac {\tg^2\sigma^2}{N}.
\end{align*}		

\subsection{Proof of Lemma \ref{lem eq: E U_{k+1}}}
\label{subsec proof_lem eq: E U_{k+1}}
	According to equation (\ref{eq:macro}), 
\begin{multline*}  
	\ox_{k+1} 
	=\ox_k-\frac{\tg}{N}\sum_{i=1}^{N}\nabla f(x_{i,k})\mathds{1}_{i,k}-\frac{a\tg}{N}\sum_{i=1}^{N}\sum_{j=1,j\neq
		i}^{N}\alpha_{ij}(x_{i,k}-x_{j,k})\mathds{1}_{i,k}\\
	+\frac{\tg}{N} \sum_{i=1}^{N} \varepsilon_{i,k}\mathds{1}_{i,k}.
\end{multline*}%
We have
\begin{align*}  \label{eq: U_{k+1}}
	& U_{k+1} \\
	= & \Vert\ox_k-x^{\ast }-\frac{\tg}{N}\sum_{i=1}^{N}\nabla f(x_{i,k})\mathds{1}_{i,k}-\frac{a\tg}{N}\sum_{i=1}^{N}\sum_{j=1,j\neq
		i}^{N}\alpha_{ij}(x_{i,k}-x_{j,k})\mathds{1}_{i,k}\notag \\
	&+ \frac{\tg}{N} \sum_{i=1}^{N} \varepsilon_{i,k}\mathds{1}_{i,k}\Vert^2 \notag\\
	=&  U_k-\frac{2\tg}{N}\sum_{i=1}^{N}\nabla^T f(x_{i,k}) (\ox_k-x^{\ast })\mathds{1}_{i,k}\notag\\
	&-\frac{2a\tg}{N}\sum_{i=1}^{N}\sum_{j=1,j\neq
		i}^{N}\alpha_{ij}(x_{i,k}-x_{j,k})^T(\ox_k-x^{\ast })\mathds{1}_{i,k}+\frac{2\tg}{N} \sum_{i=1}^{N} \varepsilon_{i,k}^T (\ox_k-x^{\ast })\mathds{1}_{i,k} \notag\\
	&+ \Vert\frac{\tg}{N}\sum_{i=1}^{N}\nabla f(x_{i,k})\mathds{1}_{i,k}+\frac{a\tg}{N}\sum_{i=1}^{N}\sum_{j=1,j\neq
		i}^{N}\alpha_{ij}(x_{i,k}-x_{j,k})\mathds{1}_{i,k}-\frac{\tg}{N} \sum_{i=1}^{N} \varepsilon_{i,k}\mathds{1}_{i,k}\Vert^2. \notag
\end{align*}
Taking conditional expectation on both sides,
\begin{align*}
	&\mathbb{E}[U_{k+1} | \mathbf{x}_k]\\
	=& U_k-\frac{2\tg}{N^2}\sum_{i=1}^{N}\nabla^T f(x_{i,k}) (\ox_k-x^{\ast })-\frac{2a\tg}{N^2}\sum_{i=1}^{N}\sum_{j=1,j\neq
		i}^{N}\alpha_{ij}(x_{i,k}-x_{j,k})^T(\ox_k-x^{\ast }) \notag\\
	&+ \mathbb{E}\left[\Vert\frac{\tg}{N}\sum_{i=1}^{N}\nabla f(x_{i,k})\mathds{1}_{i,k}+\frac{a\tg}{N}\sum_{i=1}^{N}\sum_{j=1,j\neq
		i}^{N}\alpha_{ij}(x_{i,k}-x_{j,k})\mathds{1}_{i,k} \right.\notag\\
	&\left.-\frac{\tg}{N} \sum_{i=1}^{N} \varepsilon_{i,k}\mathds{1}_{i,k}\Vert^2 | \mathbf{x}_k\right]\notag\\
	= & U_k-\frac{2\tg}{N^2}\sum_{i=1}^{N}\nabla^T f(x_{i,k}) (\ox_k-x^{\ast }) \notag\\
	&+ \frac{\tg^2}{N^3}\sum_{i=1}^{N}\mathbb{E}\left[\Vert \nabla f(x_{i,k})+a\sum_{j=1,j\neq
		i}^{N}\alpha_{ij}(x_{i,k}-x_{j,k}) -\varepsilon_{i,k}\Vert^2 | \mathbf{x}_k\right] \notag\\
	= & U_k-\frac{2\tg}{N^2}\sum_{i=1}^{N}\nabla^T f(x_{i,k}) (\ox_k-x^{\ast })\notag\\
	&+ \frac{\tg^2}{N^3}\sum_{i=1}^{N}\Vert\nabla f(x_{i,k})+a\sum_{j=1,j\neq
		i}^{N}\alpha_{ij}(x_{i,k}-x_{j,k})\Vert^2+\frac{\tg^2}{N^3}\sum_{i=1}^{N}\mathbb{E}[\Vert\varepsilon_{i,k}\Vert^2 |\mathbf{x}_k]\notag\\
	\le & U_k-\frac{2\tg}{N^2}\sum_{i=1}^{N}\nabla^T f(x_{i,k}) (x_{i,k}-x^{\ast })+\frac{2\tg}{N^2}\sum_{i=1}^{N}\nabla^T f(x_{i,k}) (x_{i,k}-\ox_k)\notag\\
	&+ \frac{\tg^2}{N^3}\sum_{i=1}^{N}\Vert\nabla f(x_{i,k})+a\sum_{j=1,j\neq
		i}^{N}\alpha_{ij}(x_{i,k}-x_{j,k})\Vert^2+\frac{\tg^2}{N^2}\sigma^2.\notag
\end{align*}

\subsection{Proof of Lemma \ref{lem nabla+a}}
\label{subsec proof_lem nabla+a}
	By the Cauchy-Schwarz inequality,\begin{multline*}
	\sum_{i=1}^{N}\Vert\nabla f(x_{i,k})+a\sum_{j=1,j\neq
		i}^{N}\alpha_{ij}(x_{i,k}-x_{j,k})\Vert^2\\
	\le 2\sum_{i=1}^{N}\Vert\nabla f(x_{i,k})\Vert^2+2a^2\sum_{i=1}^{N}\Vert \sum_{j=1,j\neq
		i}^{N}\alpha_{ij}(x_{i,k}-x_{j,k})\Vert^2,
\end{multline*}
and
\begin{multline*}
	\Vert \sum_{j=1,j\neq
		i}^{N}\alpha_{ij}(x_{i,k}-x_{j,k})\Vert^2\le \deg(i) \sum_{j=1,j\neq
		i}^{N}\alpha_{ij}\Vert x_{i,k}-x_{j,k}\Vert^2\\
	\le \overline{d} \sum_{j=1,j\neq
		i}^{N}\alpha_{ij}\Vert x_{i,k}-x_{j,k}\Vert^2.
\end{multline*}
It follows that
\begin{multline*}
	\sum_{i=1}^{N}\Vert \sum_{j=1,j\neq
		i}^{N}\alpha_{ij}(x_{i,k}-x_{j,k})\Vert^2\le \overline{d} \sum_{i=1}^{N}\sum_{j=1,j\neq
		i}^{N}\alpha_{ij}\Vert x_{i,k}-x_{j,k}\Vert^2\\
	=\overline{d} \sum_{i=1}^{N}\sum_{j=1}^{N}\alpha_{ij}\Vert e_{i,k}-e_{j,k}\Vert^2
	\le 2\overline{d} \sum_{i=1}^{N}\sum_{j=1}^{N}\alpha_{ij}(\Vert e_{i,k}\Vert^2+\Vert e_{j,k}\Vert^2)\\
	\le 4\overline{d}^2\sum_{i=1}^{N}\Vert e_{i,k}\Vert^2=4N\overline{d}^2\oV_k.
\end{multline*} 
Relation (\ref{nabla+a}) then follows.

\subsection{Proof of Lemma \ref{lem EUwV}}
\label{subsec proof_lem EUwV}
	 From (\ref{neq:doV}), (\ref{eq: E U_{k+1}}) and (\ref{nabla+a}),
\begin{align*}
	& \mathbb{E}[U_{k+1}+\omega \oV_{k+1} |\mathbf{x}_k] \\
	\le & U_k+\omega\oV_k-\frac{2\tg}{N^2}\sum_{i=1}^{N}\nabla^T f(x_{i,k}) (x_{i,k}-x^{\ast })+(1-\omega)\frac{2\tg}{N^2}\sum_{i=1}^{N}\nabla^T f(x_{i,k}) (x_{i,k}-\ox_k)\notag\\
	&-\frac{2}{N}\omega a\lambda_2 \tg\oV_k+ (1+\omega N)\frac{\tg^2}{N^3}\sum_{i=1}^{N}\Vert\nabla f(x_{i,k})+a\sum_{j=1,j\neq
		i}^{N}\alpha_{ij}(x_{i,k}-x_{j,k})\Vert^2\\
	&+(1+\omega N)\frac{\tg^2\sigma^2}{N^2}\\
	\le & U_k+\omega\oV_k-\frac{2\tg}{N^2}\sum_{i=1}^{N}\nabla^T f(x_{i,k}) (x_{i,k}-x^{\ast })+(1-\omega)\frac{2\tg}{N^2}\sum_{i=1}^{N}\nabla^T f(x_{i,k}) (x_{i,k}-\ox_k)\notag\\
	&-\frac{2}{N}\omega a\lambda_2 \tg\oV_k+ (1+\omega N)\frac{2\tg^2}{N^3}\left[\sum_{i=1}^{N}\Vert\nabla f(x_{i,k})\Vert^2+4a^2N\overline{d}^2\oV_k\right]\\
	&+(1+\omega N)\frac{\tg^2\sigma^2}{N^2}.
\end{align*}
By Assumptions \ref{asp:Lipschitz}, \ref{asp:gradient_convexity} and the fact $\nabla f(x^{\ast})=0$,
\begin{align}
	\label{neq: gradient}
	\sum_{i=1}^{N}\Vert\nabla f(x_{i,k})\Vert^2
	\le L\sum_{i=1}^{N}\nabla^T f(x_{i,k})(x_{i,k}-x^*),
\end{align}
and 
\begin{multline*}
	\frac{1}{N}\sum_{i=1}^N \nabla^T f(x_{i,k}) (x_{i,k}-\ox_k)	=\frac{1}{N}\sum_{i=1}^N
	\left(\nabla^T f(x_{i,k})-\nabla^T f(\overline{x}_k)\right) (x_{i,k}-%
	\overline{x}_k)\\
	\leq  \frac{1}{N}\sum_{i=1}^N L \|e_{i,k}\|^2\leq L\overline{V%
	}_k.
\end{multline*}
Hence,
\begin{align*}
	& \mathbb{E}[U_{k+1}+\omega \oV_{k+1}| \mathbf{x}_k]\\
	\le & U_k+\omega\oV_k-\frac{2\tg}{N^2}\sum_{i=1}^{N}\nabla^T f(x_{i,k}) (x_{i,k}-x^{\ast })+\frac{2}{N}(1-\omega) L\tg \oV_k-\frac{2}{N}\omega a\lambda_2\tg\oV_k\notag\\
	&+ (1+\omega N)\frac{2\tg^2}{N^3}\left[L\sum_{i=1}^{N}\nabla^T f(x_{i,k})(x_{i,k}-x^*)+4a^2N\overline{d}^2\oV_k\right]+(1+\omega N)\frac{\tg^2\sigma^2}{N^2}\notag\\
	= & U_k+\omega\oV_k-2\left[\frac{\tg}{N^2}-(1+\omega N)\frac{L\tg^2}{N^3}\right]\sum_{i=1}^{N}\nabla^T f(x_{i,k}) (x_{i,k}-x^{\ast })+\frac{2}{N}(1-\omega) L\tg \oV_k\notag\\
	&-\frac{2}{N}\omega a\lambda_2 \tg\oV_k+ \frac{8}{N^2}(1+\omega N) a^2\overline{d}^2\tg^2\oV_k+(1+\omega N)\frac{\tg^2\sigma^2}{N^2}.\notag
\end{align*}

\section{Proofs of Main Results}
\subsection{Proof of Theorem \ref{thm: strongly convex constant}}
\label{subsec proof_thm: strongly convex constant}
In light of Assumption \ref{asp:gradient_strconvexity},
\begin{multline*}
	\frac{1}{N}\sum_{i=1}^N \nabla^T f(x_{i,k}) (x_{i,k}-x^{\ast})= \frac{1}{N}%
	\sum_{i=1}^N (\nabla^T f(x_{i,k})-\nabla^T f(x^{\ast})) (x_{i,k}-x^{\ast})\\
	\geq\frac{\kappa}{N}\sum_{i=1}^{N}\Vert x_{i,k}-x^{\ast
		}\Vert ^{2}=\frac{\kappa}{N}\sum_{i=1}^{N}(x_{i,k}-\bar{x}_k+%
	\bar{x}_k-x^{\ast })^T (x_{i,k}-\bar{x}_k+\bar{x}_k-x^{\ast })\\
	=\frac{\kappa}{N}\sum_{i=1}^{N}\Vert x_{i,k}-\bar{x}_k\Vert ^{2}+%
	\frac{\kappa}{N}\sum_{i=1}^{N}\Vert \bar{x}_k-x^{\ast }\Vert ^{2}+\frac{2\kappa}{N}\sum_{i=1}^{N}(x_{i,k}-\bar{x}_{t})^T (\bar{x}_k-x^{\ast })
	=\kappa(\overline{V}_k+U_k).
\end{multline*}
Then by (\ref{EUwV}), when	$\gamma<N/{(1+\omega N)L}$,
\begin{align*}
	&\mathbb{E}[U_{k+1}+\omega \oV_{k+1}| \mathbf{x}_k] \\
	\le & U_k+\omega\oV_k-\frac{2}{N}\left[\tg -(1+\omega N)\frac{L\tg^2}{N}\right]\kappa (U_k+\overline{V}_k)+\frac{2}{N}(1-\omega) L\tg \oV_k\notag\\
	&-\frac{2}{N}\omega a \lambda_2\tg \oV_k+ \frac{8}{N^2}(1+\omega N) a^2\overline{d}^2\tg^2\oV_k+(1+\omega N)\frac{\tg^2\sigma^2}{N^2}\notag\\
	= & U_k+\omega\oV_k-\frac{2}{N}\kappa\left[1 -(1+\omega N)\frac{L\tg}{N}\right] \tg U_k+(1+\omega N)\frac{\tg^2\sigma^2}{N^2}\notag\\
	& - \frac{1}{N}\left[2\kappa-2\kappa(1+\omega N)\frac{L\gamma}{N}-2(1-\omega)L+2\omega a\lambda_2-\frac{8}{N}(1+\omega N)a^2\overline{d}^2 \tg\right]\tg\oV_k.\notag\\
\end{align*}
Since $\hat{\omega}$ satisfies (\ref{hat_omega_equation}),
\begin{multline*}
	2\kappa\hat{\omega}\left[1 -(1+\hat{\omega} N)\frac{L}{N}\tg\right] =2\kappa-2\kappa(1+\hat{\omega} N)\frac{L}{N}\gamma-2(1-\hat{\omega})L+2\hat{\omega} a\lambda_2\\
	-\frac{8}{N}(1+\hat{\omega} N)a^2\overline{d}^2 \tg.
\end{multline*}
%
Noting that $\hat{\omega}\in(0,1)$ from (\ref{gamma bound strong convex}) and (\ref{hat_omega_equation}), we have
\begin{multline*}
	\mathbb{E}[U_{k+1}+\hat{\omega} \oV_{k+1} |\mathbf{x}_k] \le  \left[1-\frac{2}{N}\kappa\tg+\frac{2}{N^2}\kappa(1+\hat{\omega}N) L\gamma^2 \right](U_k+\hat{\omega}\oV_k)\\
	+(1+\hat{\omega} N)\frac{\tg^2\sigma^2}{N^2}.
\end{multline*}
Taking full expectation on both sides,
\begin{multline*}
	\mathbb{E}[U_{k+1}+\hat{\omega} \oV_{k+1}] \le  \left[1-\frac{2}{N}\kappa\tg+\frac{2}{N^2}\kappa(1+\hat{\omega}N) L\gamma^2 \right]\mathbb{E}[U_k+\hat{\omega}\oV_k]\\
	+(1+\hat{\omega} N)\frac{\tg^2\sigma^2}{N^2}.
\end{multline*}
It follows that
\begin{multline*}
	\mathbb{E}[U_{k+1}+\hat{\omega} \oV_{k+1}]
	\le \frac{(1+\hat{\omega} N)\tg \sigma^2}{2\kappa N-2\kappa(1+\hat{\omega}N) L\gamma}\\
	+\left[U_0+\hat{\omega}\oV_0-\frac{(1+\hat{\omega} N)\tg \sigma^2}{2\kappa N-2\kappa(1+\hat{\omega}N) L\gamma}\right](1-C)^k.
\end{multline*}
Since $\hat{\omega} \oV_{k+1}\ge 0$, $\mathbb{E}[U_{k+1}] \le \mathbb{E}[U_{k+1}+\hat{\omega} \oV_{k+1}]$, the desired result follows.

\subsection{Proof of Theorem \ref{thm: convex}}
\label{subsec proof_thm: convex}
	Suppose Assumption \ref{asp:gradient_convexity} holds. By (\ref{EUwV}),
\begin{multline*}
	2\left[\frac{\tg}{N^2}-(1+\tilde{\omega} N)\frac{L\tg^2}{N^3}\right]\sum_{i=1}^{N}\nabla^T f(x_{i,k}) (x_{i,k}-x^{\ast })\\
	\le   U_k+\tilde{\omega}\oV_k-\mathbb{E}[U_{k+1}+\tilde{\omega} \oV_{k+1}| \mathbf{x}_k]
	+\frac{2}{N}(1-\tilde{\omega}) L\tg\oV_k-\frac{2}{N}\tilde{\omega} a\lambda_2 \tg\oV_k\\
	+ \frac{8}{N^2}(1+\tilde{\omega} N) a^2\overline{d}^2\tg^2\oV_k+(1+\tilde{\omega} N)\frac{\tg^2\sigma^2}{N^2}.
\end{multline*}
By the convexity of $f(\cdot)$, $\nabla^T f(x_{i,k}) (x_{i,k}-x^{\ast })\ge f(x_{i,k})-f(x^*)$.
From (\ref{omega_k}), $\tilde{\omega}$ satisfies
\begin{align*}
	2(1-\tilde{\omega})L-2\tilde{\omega} a\lambda_2+ \frac{8}{N}(1+\tilde{\omega} N) a^2\overline{d}^2\tg=0.
\end{align*}
We have
\begin{multline*}
	2\left[\frac{\tg}{N^2}-(1+\tilde{\omega} N)\frac{L\tg^2}{N^3}\right]\sum_{i=1}^{N}[f(x_{i,k})-f(x^*)]\\
	\le   U_k+\tilde{\omega}\oV_k-\mathbb{E}[U_{k+1}+\tilde{\omega} \oV_{k+1}| \mathbf{x}_k]+
	(1+\tilde{\omega} N)\frac{\tg^2\sigma^2}{N^2}.
\end{multline*}			
Taking full expectation on both sides and sum from $k=0$ to $k=K-1$,
\begin{multline*}
	2\sum_{k=0}^{K-1}\left[\frac{\tg}{N^2}-(1+\tilde{\omega} N)\frac{L\tg^2}{N^3}\right]\sum_{i=1}^{N}\mathbb{E}[f(x_{i,k})-f(x^*)]\\
	\le   U_0+\tilde{\omega}\oV_0-\mathbb{E}[U_{K}+\tilde{\omega} \oV_K]+
	(1+\tilde{\omega} N)\frac{K\tg^2\sigma^2}{N^2}\\
	\le U_0+\tilde{\omega}\oV_0+(1+\tilde{\omega} N)\frac{K\tg^2\sigma^2}{N^2}.
\end{multline*}		
In light of (\ref{mu_k})
and noting that $f(\bar{x}_k)\le (1/N)\sum_{i=1}^{N}f(x_{i,k})$ by the convexity of $f(\cdot)$,
\begin{align*}
	\frac{1}{K}\sum_{k=0}^{K-1} \mathbb{E}[f(\bar{x}_k)-f(x^*)]
	\le \frac{1}{2NK\mu}\left[U_0+\tilde{\omega}\oV_0+
	(1+\tilde{\omega} N)\frac{K\tg^2\sigma^2}{N^2}\right].
\end{align*}	
Since
$
	\tilde{x}_K=(1/K)\sum_{k=0}^{K-1}\bar{x}_k,
$
\begin{align*}
	\mathbb{E}[f(\tilde{x}_K)-f(x^*)]
	\le \frac{1}{2NK\mu}\left[U_0+\tilde{\omega}\oV_0+
	(1+\tilde{\omega} N)\frac{K\tg^2\sigma^2}{N^2}\right].
\end{align*}

\subsection{Proof of Corollary \ref{cor: convex}}
\label{subsec proof_cor: convex}
By (\ref{omega_k}) and the first inequality relation in (\ref{gamma D}), 
\begin{equation}
	\tilde{\omega}=\frac{NL+4a^2\overline{d}^2\tg}{NL+aN\lambda_2-4a^2N\overline{d}^2\tg}\le\frac{2NL+a\lambda_2}{2NL+aN\lambda_2}.
	\label{tilde_omega_inequality}
\end{equation}
Then,
\begin{multline*}
	\mu=\frac{\tg}{N^2}-(1+\tilde{\omega} N)\frac{L\tg^2}{N^3}=\frac{\tg}{N^2}\left[1-(1+\tilde{\omega} N)\frac{L\tg}{N}\right]\\
	\ge \frac{\tg}{N^2}\left[1-\left(1+\frac{2NL+a\lambda_2}{2L+a\lambda_2}\right)\frac{L\tg}{N}\right]
	\ge \frac{\tg}{2N^2},
\end{multline*}
where the last step follows from the second inequality relation in (\ref{gamma D}).
In light of Theorem \ref{thm: convex} and (\ref{tilde_omega_inequality}),
\begin{align*}
	\mathbb{E}[f(\tilde{x}_K)-f(x^*)]
	\le &\frac{1}{2NK\mu}\left[U_0+\tilde{\omega}\oV_0+
	(1+\tilde{\omega} N)\frac{K\tg^2\sigma^2}{N^2}\right]
	\\
	\le & \frac{N}{\tg K}\left[U_0+\tilde{\omega}\oV_0+
	\left(1+\frac{2NL+a\lambda_2}{2L+a\lambda_2}\right)\frac{K\tg^2\sigma^2}{N^2}\right]\\
	=& \frac{\sigma\sqrt{N}}{D \sqrt{\lambda_2K}}\left[U_0+\tilde{\omega}\oV_0+
	\left(1+\frac{2NL+a\lambda_2}{2L+a\lambda_2}\right)\frac{D^2\lambda_2}{N}\right].
\end{align*}

\subsection{Proof of Theorem \ref{thm: nonconvex}}
\label{subsec proof_thm: nonconvex}
	By Assumption \ref{asp:Lipschitz} and (\ref{eq:macro}), we have
	\begin{align*}
	& f(\ox_{k+1}) \\
	\le & f(\ox_k)+\nabla^T f(\ox_k)(\ox_{k+1}-\ox_k)+\frac{L}{2}\|\ox_{k+1}-\ox_k\|^2\\
	= & f(\ox_k)-\frac{\tg}{N}\sum_{i=1}^{N}\nabla^T f(\ox_k) \nabla f(x_{i,k})\mathds{1}_{i,k}-\frac{a\tg}{N}\sum_{i=1}^{N}\sum_{j=1,j\neq
		i}^{N}\alpha_{ij}\nabla^T f(\ox_k)(x_{i,k}-x_{j,k})\mathds{1}_{i,k}\\
	&+\frac{\tg}{N} \sum_{i=1}^{N} \nabla^T f(\ox_k)\varepsilon_{i,k}\mathds{1}_{i,k} \\
	&+ \frac{L}{2}\Vert\frac{\tg}{N}\sum_{i=1}^{N}\nabla f(x_{i,k})\mathds{1}_{i,k}+\frac{a\tg}{N}\sum_{i=1}^{N}\sum_{j=1,j\neq
		i}^{N}\alpha_{ij}(x_{i,k}-x_{j,k})\mathds{1}_{i,k}-\frac{\tg}{N} \sum_{i=1}^{N} \varepsilon_{i,k}\mathds{1}_{i,k}\Vert^2.
	\end{align*}
	Taking conditional expectation on both sides,
	\begin{align}
	\label{E f(ox_k+1) pre}
	& \mathbb{E}[f(\ox_{k+1})\mid \mx_k] \\
	\le & f(\ox_k)-\frac{\tg}{N^2}\sum_{i=1}^{N}\nabla^T f(\ox_k) \nabla f(x_{i,k})-\frac{a\tg}{N^2}\sum_{i=1}^{N}\sum_{j=1,j\neq
		i}^{N}\alpha_{ij}\nabla^T f(\ox_k)(x_{i,k}-x_{j,k}) \notag\\
	&+ \frac{L\tg^2}{2N^3}\sum_{i=1}^{N}\mathbb{E}\left[\Vert\nabla f(x_{i,k})+a\sum_{j=1,j\neq
		i}^{N}\alpha_{ij}(x_{i,k}-x_{j,k})-\varepsilon_{i,k}\Vert^2\mid \mx_k\right] \notag\\
	= & f(\ox_k)-\frac{\tg}{N}\|\nabla f(\ox_k)\|^2-\frac{\tg}{N^2}\sum_{i=1}^{N}\nabla^T f(\ox_k) [\nabla f(x_{i,k})-\nabla f(\ox_k)] \notag\\
	& +\frac{L\tg^2}{2N^3}\sum_{i=1}^{N}\Vert\nabla f(x_{i,k})+a\sum_{j=1,j\neq
		i}^{N}\alpha_{ij}(x_{i,k}-x_{j,k})\Vert^2+\frac{L\tg^2}{2N^3}\sum_{i=1}^{N}\mathbb{E}[\|\varepsilon_{i,k}\|^2\mid \mx_k]. \notag
	\end{align}
	Note that by Lemma \ref{lem nabla+a} and Assumption \ref{asp:Lipschitz} we have 
	\begin{multline}
	\label{nabla+a_2}
	\sum_{i=1}^{N}\Vert\nabla f(x_{i,k})+a\sum_{j=1,j\neq i}^{N}\alpha_{ij}(x_{i,k}-x_{j,k})\Vert^2\le 2\sum_{i=1}^{N}\Vert\nabla f(x_{i,k})\Vert^2+8a^2N\overline{d}^2\oV_k\\
	\le 4N\Vert\nabla f(\ox_k)\Vert^2+4\sum_{i=1}^{N}\Vert\nabla f(x_{i,k})-\nabla f(\ox_k)\Vert^2+8a^2N\overline{d}^2\oV_k\\
	\le 4N\Vert\nabla f(\ox_k)\Vert^2+(4L^2 N+8a^2N\overline{d}^2)\oV_k.
	\end{multline}
	In light of Assumption \ref{asp:Lipschitz},
	\begin{multline*}
	-\sum_{i=1}^{N}\nabla^T f(\ox_k) [\nabla f(x_{i,k})-\nabla f(\ox_k)]\le
	L\sum_{i=1}^{N}\|\nabla f(\ox_k)\|\|e_{i,k}\|\\
	\le \frac{1}{2}\sum_{i=1}^{N}(\|\nabla f(\ox_k)\|^2+L^2\|e_{i,k}\|^2)
	=\frac{1}{2}N\|\nabla f(\ox_k)\|^2+\frac{L^2 N}{2}\oV_k.
	\end{multline*}
	From Assumption \ref{asp: gradient samples}, (\ref{nabla+a_2}) and the above inequality, we obtain from (\ref{E f(ox_k+1) pre}) that
	\begin{multline*}
	\mathbb{E}[f(\ox_{k+1})\mid \mx_k] 
	\le   f(\ox_k)-\frac{\tg}{2N}\|\nabla f(\ox_k)\|^2+\frac{L^2\tg}{2N}\oV_k\\
	+\frac{L\tg^2}{N^2}\left[2\Vert\nabla f(\ox_k)\Vert^2+(2L^2+4a^2\overline{d}^2)\oV_k\right]+\frac{L\tg^2}{2N^2}\sigma^2.
	\end{multline*}
	By Lemma \ref{lem2}, Assumption \ref{asp:Lipschitz} and (\ref{nabla+a_2}),
	\begin{align*}
	\mathbb{E}[\oV_{k+1}|\mathbf{x}_k] \le & \oV_k-\frac{2\tg }{N^2}\sum_{i=1}^{N}[\nabla^T f(x_{i,k})-\nabla^T f(\ox_k)]e_{i,k}-\frac{2}{N}a\lambda_2\tg \oV_k\\
	& +\frac{2\tg^2}{N}\left[2\Vert\nabla f(\ox_k)\Vert^2+(2L^2+4a^2\overline{d}^2)\oV_k\right]+\frac{\tg^2\sigma^2}{N}\\
	\le & \oV_k+\frac{2}{N}(L-a\lambda_2)\tg \oV_k
	+\frac{2\tg^2}{N}\left[2\Vert\nabla f(\ox_k)\Vert^2+(2L^2+4a^2\overline{d}^2)\oV_k\right]\\
	& +\frac{\tg^2\sigma^2}{N}.
	\end{align*}
	Note the definition of $\check{\omega}$ in (\ref{check_omega}); the condition $a>5L/(4\lambda_2)$ implies $4N(a\lambda_2-L)>NL$, and thus when $\gamma$ is small enough, we have $\check{\omega}\in(0,1)$. Then from the last two relations we obtain that
	\begin{multline*}
	\mathbb{E}[f(\ox_{k+1}) +\check{\omega}L\oV_{k+1}|\mathbf{x}_k]
	\le f(\ox_k)+\check{\omega}L\oV_k-\frac{\tg}{N}\left(\frac{1}{2}-\frac{2L\tg}{N}-4\check{\omega} L\tg\right)\|\nabla f(\ox_k)\|^2\\
	+\left[\frac{L^2}{2N}+\frac{2\check{\omega} L}{N}(L-a\lambda_2)\right]\tg\oV_k+\left(\frac{L}{N^2}+\frac{2\check{\omega} L}{N}\right)(2L^2+4a^2\overline{d}^2)\tg^2\oV_k\\
	+\left(\frac{1}{2N}+\check{\omega}\right)\frac{L\tg^2\sigma^2}{N}.
	\end{multline*}
	By (\ref{check_omega}), $\check{\omega}$ satisfies
	\begin{equation*}
	\frac{L^2}{2N}+\frac{2\check{\omega}L}{N}(L-a\lambda_2)+\left(\frac{L}{N^2}+\frac{2\check{\omega}L}{N}\right)(2L^2+4a^2\overline{d}^2)\tg=0.
	\end{equation*}
	It follows that
	\begin{multline*}
	\frac{\tg}{N}\left(\frac{1}{2}-\frac{2L\tg}{N}-4\check{\omega}L\tg\right)\|\nabla f(\ox_k)\|^2\le f(\ox_k)+\check{\omega}L\oV_k-\left[\mathbb{E}[f(\ox_{k+1}) +\check{\omega}L\oV_{k+1}|\mathbf{x}_k]\right]\\
	+\left(\frac{1}{2N}+\check{\omega}\right)\frac{L\tg^2\sigma^2}{N}.
	\end{multline*}
	Noting the definition of $\check{\mu}$ in (\ref{check_mu}), taking full expectation and summing from $k=0$ to $k=K-1$ on both sides of the above inequality, we have
	\begin{equation*}
	\check{\mu}N\sum_{k=0}^{K-1}\|\nabla f(\ox_k)\|^2\le f(\ox_0)-f^*+\check{\omega}L\oV_0+\left(\frac{1}{2N}+\check{\omega}\right)\frac{KL\tg^2\sigma^2}{N}.
	\end{equation*}
	Given that $\mathbb{P}[R=k]=1/K,\forall k=0,1,\ldots,K-1$, we conclude that
	\begin{multline*}
	\frac{1}{L}\mathbb{E}[\|\nabla f(\ox_R)\|^2] = \frac{1}{KL}\sum_{k=0}^{K-1} \mathbb{E}[\|\nabla f(\ox_k)\|^2]\\
	\le \frac{1}{NK\check{\mu}}\left[\frac{f(\ox_0)-f^*+\check{\omega}L\oV_0}{L}+\left(\frac{1}{2}+\check{\omega}N\right)\frac{K\tg^2\sigma^2}{N^2}\right].
	\end{multline*}

\bibliographystyle{siamplain}
\bibliography{mybib}

\end{document}